\documentclass[12pt]{amsart}
\usepackage{latexsym,enumerate}
\usepackage{adjustbox}

\usepackage[utf8]{inputenc}
\usepackage[T1]{fontenc}
\usepackage{fullpage} 
\usepackage[nocolor]{sseq}
\usepackage[textwidth=1.2in, textsize=small]{todonotes} 
\usepackage{csquotes}
\usepackage{comment}
\usepackage{hyperref}
\usepackage{url}
\usepackage{float, graphicx, pinlabel}
\usepackage{tikz, tikz-cd} 
\usetikzlibrary{arrows}
\usetikzlibrary{matrix}
\usetikzlibrary{shapes}
\usetikzlibrary{calc}
\usetikzlibrary{shapes.geometric} 
\usetikzlibrary{angles,intersections,quotes}
\usetikzlibrary{decorations.markings}
\usetikzlibrary{graphs}
\usetikzlibrary{graphs.standard}
\usepackage{caption,subcaption}
\usepackage{thmtools} 
\usepackage{thm-restate}
\usepackage{mathabx}
\usepackage{amsmath,amscd,amssymb}

\theoremstyle{definition} 
\newtheorem{Theorem}{Theorem}[section]
\newtheorem{Question}{Question}
\newtheorem{Corollary}[Theorem]{Corollary}
\newtheorem{Lemma}[Theorem]{Lemma}

\newtheorem{Definition}[Theorem]{Definition}
\newtheorem*{remark}{Remark}

\theoremstyle{plain}

\newcommand{\gG}{\Gamma}
\newcommand{\BZ}{\mathbb{Z}}

\newcommand{\tConf}[1]{\mathrm{Conf}_{3}(\Theta_{#1})}

\DeclareMathOperator{\Lk}{Lk}
\DeclareMathOperator{\cConf}{Conf} 
\definecolor{darkgreen}{RGB}{0,128,0}

\title{Manifold models for hyperbolic graph braid groups on three strands}
\author{Saumya Jain, Huong Vo}

\begin{document}

\begin{abstract}
    Given a finite graph $\gG$, the associated \textit{graph braid group} $B_n(\gG)$ is the fundamental group of the unordered $n$-point configuration space of $\gG$. Genevois classified which graph braid groups are Gromov hyperbolic and asked the question: When do these groups arise as $3$-manifold groups? In this paper, we give a partial answer for $B_3(\Theta_m)$, where $\Theta_m$ is the \textit{generalized $\Theta$-graph}, a suspension of $m$-points. We show that $B_3(\Theta_5)$ is a $3$-manifold group while $B_3(\Theta_m)$ is not even quasi-isometric to a $3$-manifold group for $m \geq 7$.
   
\end{abstract}

\maketitle


\section{Introduction}

\noindent Configuration spaces of points on graphs were popularized by Ghrist~\cite{MR1873106} and Abrams \cite{MR2701024} as models for non-colliding motion of robots on a graph. 
Assuming that $n$ indistinguishable robots move on a finite graph $\gG$, their continuous non-colliding motion can be described by the topological configuration space $\cConf_n^{\text{top}}(\gG)$, defined as
  \[  \cConf_n^{\text{top}}(\gG) \coloneq (\gG^n - \Delta)/S_n, \]
  where $\Delta = \{(x_1, x_2, \dots x_n) \in \gG^n \text{ } |\text{ } x_i = x_j \text{ for some } i \ne j\}$ is the pairwise diagonal and $S_n$ is the symmetric group on $n$ letters.

These quotient spaces are aspherical, that is, $\pi_k(\cConf_n^{\text{top}}(\gG)) = 0$ for all $k>1$, and hence their homotopy type is determined by their fundamental group~\cite[Corollary~3.6]{MR2701024}. The groups $\pi_1 (\cConf_n^{\text{top}}(\gG))$ are called \emph{graph braid groups}, denoted by $B_n(\gG)$.

The space $\cConf^{\text{top}}_n(\gG)$ deformation retracts to a CW-complex which is easier to work with~\cite{MR1853416}\cite{MR2701024}. Roughly speaking, this is done by thinking of robots moving \emph{discretely} from one vertex to another along an edge. This is formalized by defining \emph{combinatorial configuration spaces}, denoted by $\cConf_n(\gG)$, as
\[    \cConf_n(\gG) \coloneq (\gG^n - \Delta^\Box)/S_n, \]  $\text{where } \Delta^\Box =\{ (e_1, e_2, \ldots, e_n) \mid e_i \text{ are edges in $\gG$, } e_i \cap e_j \neq \emptyset \text{ for some } i \neq j \}$ is the \enquote{thickened} pairwise diagonal. For example, for the tripod graph $Y$, $\cConf_2(Y)$ is the boundary of a hexagon (see Figure~\ref{fig:conf_2 of a tripod}).

The space $\cConf_n(\gG)$ naturally admits a cube complex structure described as follows. 
Each vertex in $\cConf_n(\gG)$ describes a configuration of $n$ indistinguishable robots on $n$ distinct vertices. 
Two vertices in $\cConf_n(\gG)$ are joined by an edge if we can go from one associated configuration to another by moving one of the robots across an edge without collisions. 
A $k$-dimensional cube corresponds to when $k$ such movements can be performed simultaneously. 
Equipped with the natural piecewise Euclidean metric, $\cConf_n(\gG)$ is locally CAT($0$) by Gromov's criterion~\cite[Proposition~2.3.1]{MR1853416} since the links of all vertices in the complex are flag complexes. In particular, the cube complexes $\cConf_n(\Gamma)$ are the classifying spaces of graph braid groups. 

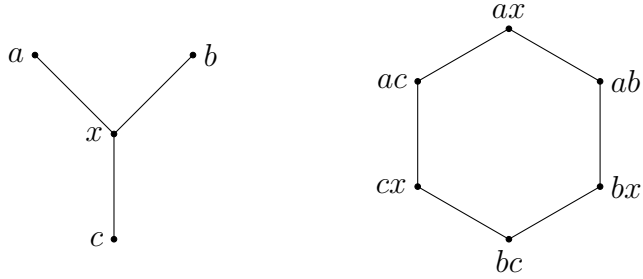
\begin{figure}[!t]
\adjustbox{max height=0.25\textheight}{%
    \begin{tikzpicture}[scale=0.7]
        \draw[black] (0,0)--(0,2)--(-1.5,3.5); 
        \draw[black] (0,2)--(1.5,3.5);
        \filldraw[black] (0,0) circle (1.5pt) node[left]{$c$};
        \filldraw[black] (0,2) circle (1.5pt) node[left]{$x$};
        \filldraw[black] (-1.5,3.5) circle (1.5pt) node[left] {$a$};
        \filldraw[black] (1.5,3.5) circle (1.5pt) node[right]{$b$};
        
        \draw[black] (7.5,0)--(5.768,1)--(5.768,3)--(7.5,4)--(9.232,3)--(9.232,1)--(7.5,0);
        \filldraw[black] (7.5,0) circle (1.5pt) node[below]{$bc$};
        \filldraw[black] (5.768,1) circle (1.5pt) node[left]{$cx$};
        \filldraw[black] (5.768,3) circle (1.5pt) node[left]{$ac$};
        \filldraw[black] (7.5,4) circle (1.5pt) node[above]{$ax$};
        \filldraw[black] (9.232,3) circle (1.5pt) node[right]{$ab$};
        \filldraw[black] (9.232,1) circle (1.5pt) node[right]{$bx$};
    \end{tikzpicture}
}
    \caption{ The tripod $Y$ (a cone on 3 points) and $\cConf_{2}(Y)$.}
    \label{fig:conf_2 of a tripod}
\end{figure}

The topological configuration space $\cConf_n ^{\text{top}}(\gG)$ depends only on the topology of the space $\gG$; however, its combinatorial counterpart $\cConf_n(\gG)$ depends on the CW structure of $\gG$ as well. 
Prue and Scrimshaw show that after sufficient subdivision (called an \textit{admissible} subdivision) of $\gG$, the combinatorial version is homotopy equivalent to the topological one~\cite[Theorem~3.2]{MR3276733}. 
Thus, we can limit our attention to $\cConf_n(\gG)$ when studying graph braid groups. 

Genevois has recently initiated a study of graph braid groups from the perspective of geometric group theory. 
In particular, he classified which graph braid groups on three or more strands are Gromov hyperbolic~\cite[Theorem~4.1]{MR4227231}. 
If $n \ge 4$, then hyperbolicity of $B_n(\gG)$ is exceedingly rare as Genevois showed that $\gG$ must be a so-called \emph{rose graph}, which implies that $B_n(\gG)$ is free~\cite[Lemma~4.6]{MR4227231}. 

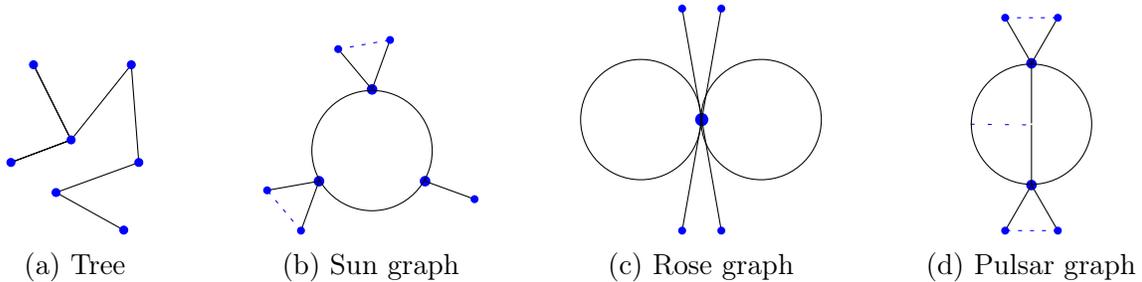
\begin{figure}[!t]
	\centering
	\begin{subfigure}{0.23\textwidth}
		\centering
        \begin{tikzpicture}
            \draw (0, 0) -- (-0.5, 1) -- (0, 0) -- (-0.8, -0.3) -- (0, 0) -- (0.8, 1) -- (0.9, -0.3) -- (-0.2, -0.7) -- (0.7, -1.2);
			\filldraw[blue] (0, 0) circle (1.5pt);
			\filldraw[blue] (-0.5, 1) circle (1.5pt);
			\filldraw[blue] (-0.8, -0.3) circle (1.5pt);
			\filldraw[blue] (0.8, 1) circle (1.5pt);
			\filldraw[blue] (0.9, -0.3) circle (1.5pt);
			\filldraw[blue] (-0.2, -0.7) circle (1.5pt);
			\filldraw[blue] (0.7, -1.2) circle (1.5pt);
        \end{tikzpicture}
	\caption{Tree}
	\label{fig:tree}
	\end{subfigure}
	\centering
	\begin{subfigure}{0.23\textwidth}
		\centering
		\begin{tikzpicture}
            \node[draw=none,minimum size=1.6cm,regular polygon,regular polygon sides=3] (n) {};
            
            \foreach \x in {1,2,3}
              {
                \fill[blue] (n.corner \x) circle[radius=2pt];
                
                \ifnum\x<3
                    \draw (n.corner \x) -- ++(\x*360/3+10:0.7)
                        coordinate(endA);
                    \fill[blue] (endA) circle[radius=1.5pt];
                    
                    \draw (n.corner \x) -- ++(\x*360/3-50:0.7)
                        coordinate(endB);
                    \fill[blue] (endB) circle[radius=1.5pt];
                    
                    \draw[blue, dotted, dash pattern=on 1.5pt off 3pt] (endA) -- (endB);
                \else 
                    \draw (n.corner \x) -- ++(\x*360/3-20:0.7)
                        coordinate(endC);
                    \fill[blue] (endC) circle[radius=1.5pt];
                \fi
              }
            \draw (0,0) circle (0.8cm);
		\end{tikzpicture}
		\caption{Sun graph}
		\label{fig:sun-graph}
	\end{subfigure}
	\hfill
	\begin{subfigure}{0.23\textwidth}
		\centering
		\begin{tikzpicture}
            \node[draw=none,minimum size=1.6cm,regular polygon,regular polygon sides=4,shape border rotate=45] (A) at (0,0) {};
            \node[draw=none,minimum size=1.6cm,regular polygon,regular polygon sides=4,shape border rotate=45] (B) at (1.6,0) {};
            
            \fill[blue] (A.east) circle[radius=2.5pt];

            \draw (A.east) -- ++(100:1.5) 
                coordinate(endA);
            \draw (A.east) -- ++(80:1.5) 
                coordinate(endB);
            \draw (A.east) -- ++(-80:1.5) 
                coordinate(endC);
            \draw (A.east) -- ++(-100:1.5) 
                coordinate(endD);

            \fill[blue] (endA) circle[radius=1.5pt];
            \fill[blue] (endB) circle[radius=1.5pt];
            \fill[blue] (endC) circle[radius=1.5pt];
            \fill[blue] (endD) circle[radius=1.5pt];
            
            \draw (0,0) circle (0.8cm);
            \draw (1.6,0) circle (0.8cm);
		\end{tikzpicture}
		\caption{Rose graph}
		\label{fig:rose-graph}
	\end{subfigure}
	\hfill
	\begin{subfigure}{0.23\textwidth}
		\centering
		\begin{tikzpicture}
            \node[draw=none,minimum size=1.6cm,regular polygon,regular polygon sides=4,shape border rotate=45] (n) {};
            
            \foreach \x in {1,3}
              {
                \fill[blue] (n.corner \x) circle[radius=2pt];
                
                \draw (n.corner \x) -- ++(\x*360/4+30:0.7)
                    coordinate(endA);
                \fill[blue] (endA) circle[radius=1.5pt];
                
                \draw (n.corner \x) -- ++(\x*360/4-30:0.7)
                    coordinate(endB);
                \fill[blue] (endB) circle[radius=1.5pt];

                \draw (n.corner \x) -- ++(\x*360*3/4:0.8)
                    coordinate(endC);
                \draw[blue, dotted, dash pattern=on 1.5pt off 3pt] (endA) -- (endB);
              }
            \draw[blue, dotted, dash pattern=on 1.5pt off 5pt] (n.corner 2) -- (endC);
            \draw (0,0) circle (0.8cm);
		\end{tikzpicture}
		\caption{Pulsar graph}
		\label{fig:pulsar-graph}
	\end{subfigure}
	\caption{The families of graphs $\gG$ for which $B_3(\gG)$ is hyperbolic.}
	\label{fig:family-hyperbolic-graphs}
\end{figure}

In the $3$-strand case, he showed that there are four classes of graphs -- trees,  sun graphs,  rose graphs, and  pulsar graphs, as shown in Figure~\ref{fig:family-hyperbolic-graphs} -- which give hyperbolic $B_3(\gG)$. Furthermore, he asked:
\begin{Question}
    When are the hyperbolic graph braid groups on three strands the fundamental groups of $3$-manifolds?
\end{Question} 

\noindent Again, in most cases the graph braid group turns out to be free. Specifically, when $\gG$ is a tree graph, a sun graph, or a rose graph, $B_3(\gG)$ is free by the results of Farley and Sabalka~\cite[Theorem~2.5 and Theorem~4.3]{MR2171804}, Appiah et al.~\cite[Theorem~1.3]{MR4901846}, and Genevois~\cite[Lemma 4.6]{MR4227231}, respectively. 
For the pulsar graph, $B_3(\gG) \cong B_3(\Theta_m) \ast \mathbb{F}$, where $\mathbb{F}$ is a free group of finite rank and $\Theta_m$ is a suspension on $m$ points~\cite[Theorem~1.4]{MR4901846}.
We will therefore be interested in investigating when the $3$-strand graph braid group $B_3(\Theta_m)$ is a $3$-manifold group.

The first obstruction to this is the Euler characteristic of $\tConf{m}$. It is an easy calculation to show that $\chi(\tConf{m})$ is $\frac{m(m-2)(m-7)}{6}$.
We observe that $\chi(\tConf{m}) > 0$ when $m > 7$. This implies that $B_3(\Theta_m)$, for any $m > 7$, is not the fundamental group of a $3$-manifold (see the remark after Corollary~\ref{cor:3m} for more details). For $m \le 4$, these are surface groups. In particular, $B_3(\Theta_4)$ is the fundamental group of the closed, connected, orientable genus $3$ surface $\Sigma_3$~\cite{MR3797072}~\cite{MR3887191}. 

In this paper, we provide a positive answer to Genevois' question for $m=5$ and a negative answer for $m = 7$. 

\begin{Theorem}\label{thm:main5}
    The configuration space $\tConf{5}$ thickens to an orientable $3$-manifold. Thus, $B_3(\Theta_5)$ is a $3$-manifold group.
\end{Theorem}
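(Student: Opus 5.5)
We will show that $\tConf{5}$, for a suitable admissible subdivision of $\Theta_5$ (or a homotopy-equivalent subcomplex after collapsing free faces), is a $2$-dimensional square complex whose vertex links embed in the $2$-sphere compatibly with a global orientation. That is exactly what is needed to thicken it to an orientable $3$-manifold. The general criterion we use is the following. A $2$-complex $X$ thickens to a $3$-manifold if there is a choice of embedding of each vertex link $\Lk(v)$ (a graph) into $S^2$ such that, for every edge $e$ of $X$ with endpoints $v,w$, the local cyclic orderings of the squares around $e$ seen from $v$ and from $w$ agree. Agreement may be either orientation-preserving or orientation-reversing, and the thickening is orientable if we can make every such identification orientation-reversing. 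Given such data, one builds the thickening by taking a $3$-ball around each vertex, a $D^2\times I$ around each edge glued along the disks in the boundary spheres, and a thickened square $D^2\times I$ glued along the boundary. Since $\pi_1$ of the thickening equals $\pi_1(\tConf{5})=B_3(\Theta_5)$, the second sentence of the theorem follows.

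The concrete steps are as follows. First, fix the CW structure: $\Theta_5$ with vertices $u,w$ (the suspension points) and each of the five arcs subdivided enough to be admissible for three points (two interior vertices per arc suffices). Second, simplify $\tConf{5}$ up to homotopy. We will collapse free faces and identify the subcomplex spanned by configurations where no robot sits on a "far" subdivision vertex. The goal is a smaller square complex whose Euler characteristic still equals $\frac{5\cdot 3\cdot(-2)}{6}=-5$, and which has few vertex types. Third, classify vertex links up to the $S_5$ (times $\BZ/2$ swapping $u,w$) symmetry. Links are graphs whose vertices are edges of the complex, namely single-robot moves. Two moves are joined when they commute, giving a square. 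For vertices with a robot at $u$ or $w$ the link contains a star-like pattern. For example, when robots sit at $u$ and on two arcs, the link is built from moves of the robot at $u$ into the free arcs together with the other robots' moves. We need every link to be planar. The potentially nonplanar ones are those resembling $K_{3,3}$: one robot at $u$ with $4$ free arcs, times moves of another robot at $w$. For $m=5$ the counts (at most $4$ free arcs, and moves of the other robot limited to $2$ or $3$ directions) should yield graphs like $K_{2,n}$ or $K_{3,3}$ minus an edge, which are planar. We expect this check to be exactly where $m=5$ works and larger $m$ fails.

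The main obstacle is the fourth step: choosing the planar embeddings coherently so that the edge-compatibility conditions hold, with orientations all reversing. Each edge of the complex has a cyclic order of incident squares, and these are constrained from both endpoints. Our plan is to exploit the symmetry. We will pick one embedding per link type, defined via the cyclic order of the five arcs of $\Theta_5$ around $u$ (and the reversed order around $w$, mimicking an embedding of $\Theta_5$ in the plane). Then we check compatibility on each orbit type of edges. If the naive cyclic-order choice fails on some edge class, we would try twisting the order at $w$ by a permutation, or alternatively search for a different homotopy-equivalent subcomplex. Finally, orientability follows once we verify that, on the resulting thickening, each loop in a generating set (taken from a spanning tree plus extra edges) has an even number of orientation-preserving gluings. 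Equivalently, we can arrange all gluings to be reversing by consistently orienting the spheres using the planar orientation induced from the plane containing $\Theta_5$.
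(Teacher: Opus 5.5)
Your overall framework matches the paper's. Planar links, plus a choice of embeddings under which every edge lying in three or more squares is glued orientation-reversingly, is exactly Lasheras' criterion, and your handle construction is its sufficiency direction.

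The gap is that you never establish the one thing the theorem depends on. Planarity of links is not enough: the presentation complex of $BS(1,2)$ has planar links and admits no thickening. So everything rests on exhibiting a coherent family of embeddings. Your fourth step only describes a search with fallbacks (``if the naive cyclic-order choice fails \dots\ we would try \dots''). Your closing claim that the plane containing $\Theta_5$ orients all the link spheres consistently has no mechanism behind it.

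The setup also needs repair. With two interior vertices per arc, the middle edges of any three arcs are pairwise disjoint, so $\mathrm{Conf}_3$ contains $3$-cubes and is not a square complex. The collapse you invoke to reach a $2$-complex is never specified. One interior vertex $i$ per arc is already admissible and gives a square complex directly. Its links are:
\begin{itemize}
\item a hexagon at $ijk$;
\item $K_{2,3}$ at $uij$ and $wij$;
\item the cube graph ($K_{4,4}$ minus a perfect matching) at $uwi$, since the robots at $u$ and $w$ each have four free directions. This is not a $K_{2,n}$ or $K_{3,3}$ minus an edge, as you guessed.
\end{itemize}

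To close the gap you need the reduction and explicit check that the paper carries out. Only the edges $(uwi)(uij)$ and $(uwi)(wij)$ lie in three squares; the edges at $ijk$ lie in two. In any planar embedding of $K_{2,3}$ the two poles see the three paths in opposite cyclic orders. So after choosing $\phi_{uij}$ suitably, the requirement becomes: for all $i\neq j$, the cyclic orders at $uij$ in $\Lk(uwi)$ and in $\Lk(uwj)$ are opposite, both being orders on $\{1,\dots,5\}\setminus\{i,j\}$; likewise at $wij$. The paper draws five explicit embeddings of the cube, obtained from one by the $5$-cycle $(12345)$, and verifies this condition.

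There is also a conceptual way to see that a solution exists. Realize $\Lk(uwi)$ as the standard cube, with the move $u\to j$ at a vertex $t_j$ of an inscribed regular tetrahedron and the move $w\to j$ at $-t_j$. By $3$-connectivity, every embedding is this one or its mirror. An embedding is then the same as an orientation of the $3$-simplex on $\{1,\dots,5\}\setminus\{i\}$. The cyclic order at $uij$ (the move $w\to j$) is the induced orientation of the face $\{1,\dots,5\}\setminus\{i,j\}$, and the order at $wij$ is its reverse. The required condition now says that the five facets of a $4$-simplex induce opposite orientations on shared faces, which the boundary orientation of $[12345]$ achieves. Without some such verification, your argument does not yet prove the theorem.
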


\noindent To prove this, we first subdivide $\tConf{5}$ to form a finite 2-dimensional complex with planar links. Not all such complexes can be thickened to $3$-manifolds. For example, the presentation complex of the Baumslag-Solitar subgroup $BS(1,2)$, whose vertex link is a suspension of three points, admits no such thickening as any $BS(m, n)$ for $m \ne n$ is not a $3$-manifold group~\cite[Proposition~2]{MR438320}. We use a theorem of Lasheras~\cite[Theorem~1.1]{MR1621973} which says a thickening to a $3$-manifold exists if and only if a certain cocycle 
is trivial. (This cocycle roughly measures the \enquote{twists} in the complex that cannot be \enquote{untwisted} and hence presents an obstruction to the thickening of the complex to a $3$-manifold.) 
We construct a family of embeddings into $\mathbb{R}^2$ so that the associated cochain is trivial, thus allowing for a $3$-thickening of the complex.

For $m=7$, we find that such a thickening is not possible. In fact:

\begin{Theorem}\label{thm:main7}
    $B_3(\Theta_7)$ is not quasi-isometric to a $3$-manifold group.
\end{Theorem}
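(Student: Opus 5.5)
The plan is to combine quasi-isometry invariance of hyperbolicity and of the Gromov boundary with the known restrictions on the boundaries of hyperbolic $3$-manifold groups. By Genevois' classification, $B_3(\Theta_7)$ is hyperbolic, since $\Theta_7$ is a pulsar graph. Suppose $B_3(\Theta_7)$ were quasi-isometric to $\pi_1(M)$ for some $3$-manifold $M$. Then $\pi_1(M)$ is finitely generated and hyperbolic, and passing to a Scott core we may take $M$ compact. The boundaries $\partial B_3(\Theta_7)$ and $\partial \pi_1(M)$ would then be homeomorphic.

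\textbf{Step 1: topological dimension of the boundary.} Since $\tConf{7}$ is a compact, $2$-dimensional, locally CAT($0$) classifying space, $B_3(\Theta_7)$ has cohomological dimension $\le 2$. I would show it is exactly $2$, for instance because the universal cover contains flat or hyperbolic $2$-dimensional pieces, or because $H_2$ is nonzero (as $\chi=0$ while $H_1$ has large rank). Bestvina--Mess then gives $\dim \partial B_3(\Theta_7)=1$.

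\textbf{Step 2: reduce to planarity.} If $M$ were closed, its boundary would be $S^2$, which has dimension $2$; so $M$ has nonempty boundary. We may assume the group is one-ended, by checking that $\Theta_7$ gives a one-ended group, or otherwise by passing to a one-ended factor. By the Kapovich--Kleiner description of hyperbolic groups with $1$-dimensional boundary coming from $3$-manifolds with boundary, $\partial\pi_1(M)$ is then a circle, a Sierpiński carpet, or at least a planar continuum. It therefore suffices to exhibit a nonplanar graph, such as $K_{3,3}$, embedded in $\partial B_3(\Theta_7)$.

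\textbf{Step 3: find a nonplanar vertex link.} Take a vertex of $\tConf{7}$ in which one robot sits at a degree-$7$ suspension point and the other two sit on (subdivided) edges. Its link has vertices given by the single-robot moves, and edges given by pairs of commuting moves. I expect this to be a join-like graph: the $7$ moves of the central robot are joined to the moves of the other robots along disjoint arms. This should contain $K_{3,3}$, whereas for $m=5$ all links can be made planar, which is consistent with Theorem~\ref{thm:main5}.

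\textbf{Step 4: transfer to the boundary (main obstacle).} The hard step is pushing the nonplanar link out to the Gromov boundary. The plan is to build, in the universal cover, a "tree of flats and half-planes" branching along the $K_{3,3}$ pattern, and to show it is quasi-isometrically embedded, hence contributes a $K_{3,3}$ to the boundary. To do this I would extend each edge-path through the link to bi-infinite CAT($0$) geodesic rays, staying inside a locally convex subcomplex, for example one coming from a sub-configuration space $\cConf_3$ of a subgraph containing $\Theta_3$-type cycles. Then I would use hyperbolicity to show that rays diverging through distinct link edges give distinct boundary points, connected as in $K_{3,3}$.

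The delicate point is the lack of geodesic completeness in $\cConf_3(\Theta_7)$, since free faces might let the branching collapse. I would first try to check that the relevant subcomplex has no free faces and satisfies a Kapovich--Kleiner-type "nonplanar link in a complex without free faces implies nonplanar boundary" criterion.
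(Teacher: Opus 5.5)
Your reduction in Steps 0--2 is sound and is the same one the paper uses. The Gromov boundary is a quasi-isometry invariant, and boundaries of hyperbolic $3$-manifold groups contain no nonplanar graph. The paper cites Bestvina--Kapovich--Kleiner for this; your route through Kapovich--Kleiner and geometrization also works. So it suffices to embed $K_{3,3}$ in $\partial B_3(\Theta_7)$. Step 1 is unnecessary, since $K_{3,3}$ does not embed in $S^2$ either.

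The problems are in Steps 3--4. In Step 3 you pick the wrong vertex. Use the minimal subdivision, which is what makes the complex $2$-dimensional as you assume in Step 1. At $aij$ the robot at $a$ has only the five moves $a\to k$ with $k\neq i,j$. The robots at $i$ and $j$ each have the single move to $b$, and these two moves do not commute. So $\Lk(aij)=K_{2,5}$, which is planar. The nonplanar links occur at $abi$, where the link is $K_{6,6}$ minus a perfect matching.

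The more serious gap is Step 4, which is the entire content of the theorem. You leave it as a hoped-for principle: ``nonplanar link plus no free faces implies a nonplanar graph in the boundary.'' This cannot be treated as routine. $\mathrm{Conf}_3(\Theta_6)$ is also locally CAT($0$) with hyperbolic fundamental group and has no free faces. Its links $\Lk(abi)$, which are $K_{5,5}$ minus a perfect matching, are nonplanar, since they have $20>2\cdot 10-4$ edges. Yet the paper explicitly leaves open whether $B_3(\Theta_6)$ is even quasi-isometric to a $3$-manifold group.

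The local picture also does not suffice on its own terms. Rays leaving $\widetilde{ab1}$ through the squares of a link edge branch at every edge of the complex. Nothing guarantees they assemble into boundary arcs, let alone arcs meeting only at prescribed endpoints. Moreover, a hyperbolic CAT($0$) space contains no flat planes or flat half-planes, so your ``tree of flats'' would have to be built from negatively curved surfaces anyway.

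The missing idea is to use global, locally convex surfaces coming from subgraphs.
\begin{enumerate}
\item For each set $S$ of four arcs, $\Gamma(S)\cong\mathrm{Conf}_3(\Theta_4)$ is a closed genus-$3$ surface. It is locally convex by the full-link criterion, so its boundary circle embeds in $\partial B_3(\Theta_7)$.
\item The closed geodesics $\Gamma(12)$, $\Gamma(13)$, $\Gamma(14)$ meet at $ab1$. Their six endpoints alternate around $\partial\widetilde{\Gamma(1234)}$ and form a hexagon.
\item The surfaces $\Gamma(1256)$, $\Gamma(1367)$, $\Gamma(1457)$ meet $\Gamma(1234)$ exactly along $\Gamma(12)$, $\Gamma(13)$, $\Gamma(14)$ respectively. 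They meet each other pairwise along single closed geodesics ($\Gamma(16)$, $\Gamma(15)$, $\Gamma(17)$).
\item Hence half of each of their boundary circles is an arc joining an opposite pair of hexagon vertices. These three arcs complete the hexagon to a subdivided $K_{3,3}$.
\end{enumerate}
This requires four $4$-element sets of arcs pairwise sharing exactly two arcs. That is possible with seven arcs but not with six. This combinatorial condition, rather than a nonplanar link, is what separates $\Theta_7$ from $\Theta_6$.
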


\noindent To show this, we find a complete bipartite graph $K_{3,3}$ in the boundary of $B_3(\Theta_7)$. The $K_{3,3}$ graph is built by stitching together paths in the boundary coming from certain embedded surfaces in $\tConf{7}$. 
The presence of these subsurfaces is induced by the natural inclusions $\Theta_4 \hookrightarrow \Theta_7$.
In turn, the work of Bestvina--Kapovich--Kleiner~\cite{MR1933584} says that $B_3(\Theta_7)$ is not a $3$-manifold group. Since the Gromov boundary is a quasi-isometric invariant, this implies Theorem~\ref{thm:main7}. 

In contrast to $\tConf{5}$, the vertex links in $\tConf{6}$ are not all planar. So Lasheras' thickening criterion cannot be applied. Moreover, there are not enough $\Theta_4$ inclusions in $\Theta_6$ to confirm the presence of a $K_{3,3}$ in the boundary of $B_3(\Theta_6)$. Thus, we are left with the question:
\begin{Question}
    Is $\tConf{6}$ homotopy equivalent to a $3$-manifold? Is $B_3(\Theta_6)$ a $3$-manifold group, even up to quasi-isometry?
\end{Question}

\subsection*{Acknowledgments}
The authors would like to thank Kevin Schreve for introducing us to graph braid groups, and for helpful advising, numerous stimulating discussions and support throughout this project. The authors are also thankful to Pallavi Dani for her constructive comments on the drafts of this paper.
For the duration of this work, the first author was supported by the U.S. National Science Foundation under Award Number DMS-2407104 and the Department of Mathematics at Louisiana State University. Both authors were also supported by the U.S. National Science Foundation under Grant DMS-2203325.


\section{\texorpdfstring{$B_3(\Theta_5)$ is a $3$-manifold group}{B3(Theta5) is a 3-manifold group}}

\noindent In this section, we show that the combinatorial configuration space $\tConf{5}$ thickens to an orientable $3$-manifold. We will use the minimal admissible subdivision of the $\Theta_5$ graph, as shown in Figure~\ref{fig:theta5-graph}. The subdivided graph $\Theta_5$ does not contain three pairwise disjoint edges. Thus, the complex $\mathrm{Conf}_3(\Theta_5)$ is a connected, locally-finite square complex.
We begin by recalling Lasheras' thickening obstruction~\cite{MR1621973}, which is stated for simplicial complexes and then describe a modification suited to square complexes.  

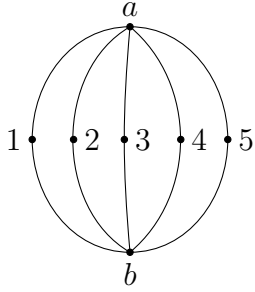
\begin{figure}[!t]
    \centering
    \begin{tikzpicture}
    \tikzset{every picture/.style={line width=1pt}} 
        \filldraw[black] (0, 1.5) circle (1.2pt) node[anchor=south]{$a$};
        \filldraw[black] (0, -1.5) circle (1.2pt) node[anchor=north]{$b$};
        \filldraw[black] (-1.3, 0) circle (1.2pt) node[anchor=east]{$1$};
        \filldraw[black] (-0.75, 0) circle (1.2pt) node[anchor=west]{$2$};
        \filldraw[black] (-0.075, 0) circle (1.2pt) node[anchor=west]{$3$};
        \filldraw[black] (0.675, 0) circle (1.2pt) node[anchor=west]{$4$};
        \filldraw[black] (1.3, 0) circle (1.2pt) node[anchor=west]{$5$};
        
        \draw (0, 1.5) to[out=180, in=90] (-1.3, 0) to[out=-90, in=180] (0, -1.5);
        \draw (0, 1.5) to[out=210, in=150] (0, -1.5);
        \draw (0, 1.5) to[out=-95, in=95] (0, -1.5);
        \draw (0, 1.5) to[out=-40, in=40] (0, -1.5);
        \draw (0, 1.5) to[out=0, in=90] (1.3, 0) to[out=-90, in=0] (0, -1.5);
    \end{tikzpicture}
	\caption{The $\Theta_5$ graph.}
	\label{fig:theta5-graph}
\end{figure}

\begin{Definition}[{\cite[Definition~3.1]{MR1621973}}]\label{def:lasheras-embedding}
Let $X$ be a $2$-dimensional locally finite simplicial complex in which the link $\Lk{(v, X)}$ of every vertex $v$ of $X$ is planar. Consider an embedding $\phi_v: \Lk{(v, X)} \longrightarrow \mathbb{R}^2$. Following the clockwise orientation around $\phi_{v}{(w)}$ in $\mathbb{R}^2$, where $w \in \Lk{(v, X)}$, we obtain a tuple that is associated with the vertices adjacent to $w$ in $\Lk{(v, X)}$. We call this tuple the cyclic ordering on $\Lk{(\overline{vw}, X)}$ determined by $\phi_v$ and denote it by $\theta_{\phi_v}(w)$.   

We note that if the edge $e = \overline{vw}$ is contained in at most two faces, then there is only one cyclic ordering on $\Lk{(e, X)}$. If $e$ is contained in exactly three faces, there are two such possible cyclic orderings.

\end{Definition}
\begin{Definition}[{\cite{MR1621973}}]\label{def:lasheras-cochain}
Let $X$ be as in Definition~\ref{def:lasheras-embedding} and $X^{(1)}$ denote its $1$-skeleton. Let $X'$ be the subgraph of $X^{(1)}$ consisting of edges $e = \overline{vw}$ of $X$ which are contained in at least three faces. Consider the cochain complex of $X'$ over $\BZ_2$,
\[
0 \longrightarrow C^0(X'; \BZ_2) \longrightarrow  C^1(X'; \BZ_2) \longrightarrow 0.
\]

\noindent Given a family of embeddings $\Phi = \{\phi_v: \Lk{(v,X)} \longrightarrow \mathbb{R}^2\}$, we associate a cocycle   
\[ 
\omega_\Phi = \sum_{e \in X'} \omega_\Phi (e) \cdot e \in C^1(X'; \BZ_2),
\]
where
\[ 
\omega_{\Phi}(e) = 
\begin{cases}
0  \text{ if } \theta_{\phi_v}(w) \text{ and } \theta_{\phi_w}(v) \text{ are opposite cyclic orders on } \Lk{(e,X)}, \\
1 \text{ otherwise.}
\end{cases}
\]
\end{Definition}

\noindent In~\cite[Theorem~1.1]{MR1621973}, Lasheras proves that such a simplicial complex thickens to an orientable $3$-manifold if and only if $\omega_{\Phi}$ is trivial for some family of embeddings $\Phi$. We now adapt this result to square complexes. A square complex can be subdivided to form a simplicial complex by adding barycenters of squares. This does not change the homeomorphism type of links of the original vertices and the new barycenters have links isomorphic to $S^1$. Moreover, the new edges are contained in exactly two faces, and therefore, do not contribute to the cochain $\omega_{\Phi}$ for any family of embeddings $\Phi$. Therefore, Theorem~1.1 of {\cite{MR1621973} implies:

\begin{Theorem}\label{thm:lasheras-cube}
A $2$-dimensional connected locally finite cube complex $X$ thickens to an orientable $3$-manifold if and only if 
	\begin{enumerate}[(i)]
		\item $\Lk{(v, X)}$ is planar for every vertex $v$ of $X$. \label{thm:lasheras-cube-i}
		\item There exists a family of embeddings $\Phi = \{\phi_v: \Lk{(v, X)} \longrightarrow \mathbb{R}^2\}$ so that the associated cochain $\omega_{\Phi}$ is trivial.  \label{thm:lasheras-cube-ii}
	\end{enumerate}
\end{Theorem}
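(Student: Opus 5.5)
The plan is to reduce to Lasheras' simplicial theorem~\cite[Theorem~1.1]{MR1621973} by passing to a simplicial subdivision and checking that neither the hypotheses nor the obstruction cochain changes. Let $X$ be a $2$-dimensional connected locally finite cube complex. We form $X^\Delta$ by adding a barycenter $b_Q$ to each square $Q$ and coning off $\partial Q$ from $b_Q$, so that each square becomes four triangles. Then $X^\Delta$ is a $2$-dimensional locally finite simplicial complex, homeomorphic to $X$. A regular neighborhood of a complex depends only on the underlying polyhedron, so ``$X$ thickens to an orientable $3$-manifold'' is equivalent to the same statement for $X^\Delta$. If $X$ fails to be simplicial because squares are glued to themselves or multiple edges share endpoints, we would first apply a further barycentric subdivision. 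I expect this not to affect the argument below, since it only introduces more degree-two edges and circle links.

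Next we compare links. For an original vertex $v$, the link $\Lk(v,X^\Delta)$ is obtained from $\Lk(v,X)$ by subdividing each arc coming from a square corner at $v$: the new midpoint is the edge $\overline{v b_Q}$. This is a homeomorphism of graphs, so the two links are planar simultaneously, and embeddings $\phi_v$ of one correspond to embeddings of the other. For a barycenter $b_Q$, the link is the $4$-cycle $\partial Q$, which is a circle and therefore planar. Its embedding in $\mathbb{R}^2$ is unique up to isotopy and reflection. Hence condition~(\ref{thm:lasheras-cube-i}) for $X$ is equivalent to planarity of all links in $X^\Delta$.

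Then we compare cochains. The new edges of $X^\Delta$ are the edges $\overline{b_Q v}$, each lying in exactly two triangles, so they do not belong to $(X^\Delta)'$. An original edge $e$ of $X$ lies in exactly as many triangles of $X^\Delta$ as squares of $X$, with one triangle from each square. It therefore lies in $(X^\Delta)'$ exactly when it lies in $X'$, where $X'$ is defined for the cube complex in the analogous way. Moreover, $\Lk(e,X^\Delta)$ is canonically identified with $\Lk(e,X)$ by the map $b_Q\leftrightarrow Q$. Under this identification, the cyclic orders $\theta_{\phi_v}(w)$ at the vertex $w\in\Lk(v)$ agree, because subdividing the incident arcs does not change the rotation system around $w$. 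So a family $\Phi$ on $X$ extends to a family $\Phi^\Delta$ on $X^\Delta$ by choosing any embedding at each barycenter, and $\omega_{\Phi^\Delta}$ is $\omega_\Phi$ under the identification $(X^\Delta)'=X'$. Conversely, every family on $X^\Delta$ restricts to a family on $X$ in this way. Therefore some $\omega_\Phi$ vanishes if and only if some $\omega_{\Phi^\Delta}$ vanishes, and Lasheras' theorem gives the equivalence.

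The main obstacle is making the link identification precise, in particular the claim that rotation systems are preserved under subdividing arcs. The first thing to write down is the explicit identification $\Lk(v,X^\Delta)\cong\Lk(v,X)$, and the cyclic orders are then read off from it. Some care is also needed in defining $\theta$ and $\omega$ for cube complexes, since Definitions~\ref{def:lasheras-embedding} and~\ref{def:lasheras-cochain} are stated only for simplicial complexes. I would take the cube-complex versions to be defined verbatim, with ``faces'' meaning squares.
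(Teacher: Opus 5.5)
Your proposal is correct and follows the paper's argument. You cone off each square and observe that links of old vertices are merely subdivided, that barycenter links are circles, and that the new edges $\overline{b_Q v}$ lie in exactly two triangles. So $X'$ and $\omega_\Phi$ are unchanged and Lasheras' theorem applies. Your explicit identification of $\Lk(e,X^\Delta)$ with the set of squares containing $e$ is, if anything, more careful than the paper, which leaves it implicit.

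One correction concerns your aside on non-simplicial $X$, a case the paper does not treat at all. The justification you give there is wrong, although the conclusion is right:
\begin{itemize}
\item A barycentric subdivision also cuts each edge $\overline{vw}$ of $X'$ into two halves, and both halves still lie in $k\ge 3$ faces.
\item The midpoint $m$ has link the suspension of $k$ points, not a circle.
\item The missing ingredient is that every planar embedding of this suspension induces opposite cyclic orders at its two poles. This is the same fact the paper uses for $\Lk(aij,\tConf{5})$.
\item That fact lets you choose $\phi_m$ with $\omega(\overline{vm})=0$, which then forces $\omega(\overline{mw})$ to equal the old value on $\overline{vw}$.
\end{itemize}
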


To apply Theorem~\ref{thm:lasheras-cube}, we need to understand the links of vertices in $\tConf{m}$. 

\begin{remark}
	\label{rem:lk} Let $v_1v_2v_3$ be a vertex in $\tConf{m}$, where each $v_i$ is a vertex of $\Theta_m$.  A vertex in $\Lk(v_1v_2v_3, \tConf{m})$ corresponds to an edge of $\Theta_m$ containing one of the vertices $v_i$ and disjoint from the other two. An edge in $\Lk(v_1v_2v_3, \tConf{m})$ corresponds to two disjoint edges in $\Theta_m$ each containing one of the vertices $v_i$. 
\end{remark}

\noindent Since there are no $3$-cubes in $\tConf{m}$, there are no $2$-simplices in the links of vertices. We now use Theorem~\ref{thm:lasheras-cube} to show that the square complex $\tConf{5}$ thickens to an orientable $3$-manifold.

\begin{proof}[Proof of Theorem~\ref{thm:main5}]

Let $a$ and $b$ be the suspension vertices and $1\leq i,j,k \leq 5$ be the suspended vertices in $\Theta_5$, as shown in Figure~\ref{fig:theta5-graph}. Then the four types of vertices in the square complex $\tConf{5}$ are $abi$, $aij$, $bij$, and $ijk$. The links of all vertices in $\tConf{5}$ are planar (see Figure~\ref{fig:theta5-links}). Therefore, the first assumption of Theorem~\ref{thm:lasheras-cube} is satisfied.

\begin{figure}
	\centering
	\begin{subfigure}{0.3\textwidth}
		\centering
		\begin{tikzpicture}[scale=0.5]
			\node[regular polygon, regular polygon sides=6, draw, minimum size=3.3cm] (hexagon) {};
			\filldraw (hexagon.corner 1) circle (2.5pt) node[anchor=south] {$bij$};
			\filldraw (hexagon.corner 2) circle (2.5pt) node[anchor=south] {$ajk$};
			\filldraw (hexagon.corner 3) circle (2.5pt) node[anchor=east] {$bik$};
			\filldraw (hexagon.corner 4) circle (2.5pt) node[anchor=north] {$aij$};
			\filldraw (hexagon.corner 5) circle (2.5pt) node[anchor=north] {$bjk$};
			\filldraw (hexagon.corner 6) circle (2.5pt) node[anchor=west] {$aik$};
		\end{tikzpicture}
		\caption{$\Lk{(ijk)}$}
		\label{fig:theta5-links-ijk}
	\end{subfigure}
	\hfill
	\begin{subfigure}{0.3\textwidth}
		\centering
		\begin{tikzpicture}[scale=0.7]
			\draw (0, 2.2) -- (-1.5, 0) -- (0, -2.2) -- (0.5, 0) -- (0, 2.2) -- (2, 0) -- (0, -2.2);
			\filldraw[black] (0, 2.2) circle (1.5pt) node[anchor=south] {$abj$};
			\filldraw[black] (0, -2.2) circle (1.5pt) node[anchor=north] {$abi$};
			\filldraw[black] (-1.5, 0) circle (1.5pt) node[anchor=east] {$ijk_1$};
			\filldraw[black] (0.5, 0) circle (1.5pt) node[anchor=east] {$ijk_2$};
			\filldraw[black] (2, 0) circle (1.5pt) node[anchor=west] {$ijk_3$};
		\end{tikzpicture}
		\subcaption{$\Lk{(aij)}$}
		\label{fig:theta5-links-aij}
	\end{subfigure}
	\hfill
	\begin{subfigure}{0.3\textwidth}
		\centering
		\begin{tikzpicture}[scale=0.5]
			\node[regular polygon, regular polygon sides=4, draw, minimum size=4cm] (osquare) {};
			\node[regular polygon, regular polygon sides=4, draw, minimum size=1.5cm] (isquare) {};
			\draw (osquare.corner 1) -- (isquare.corner 1);
			\draw (osquare.corner 2) -- (isquare.corner 2);
			\draw (osquare.corner 3) -- (isquare.corner 3);
			\draw (osquare.corner 4) -- (isquare.corner 4);
			\filldraw (osquare.corner 1) circle (2.5pt) node[anchor=south] {$bij_4$};
			\filldraw (osquare.corner 2) circle (2.5pt) node[anchor=south] {$aij_1$};
			\filldraw (osquare.corner 3) circle (2.5pt) node[anchor=north] {$bij_2$};
			\filldraw (osquare.corner 4) circle (2.5pt) node[anchor=north] {$aij_3$};
			\filldraw (isquare.corner 1) circle (2.5pt) node[anchor=west] {$aij_2$};
			\filldraw (isquare.corner 2) circle (2.5pt) node[anchor=east] {$bij_3$};
			\filldraw (isquare.corner 3) circle (2.5pt) node[anchor=east] {$aij_4$};
			\filldraw (isquare.corner 4) circle (2.5pt) node[anchor=west] {$bij_1$};
		\end{tikzpicture}
		\caption{$\Lk{(abi)}$}
		\label{fig:theta5-links-abi}
	\end{subfigure}
	\caption{Links of $ijk$, $aij$, and $abi$ in $\tConf{5}$.}
	\label{fig:theta5-links}
\end{figure}

\begin{figure}[!t]
    \centering
    \begin{tikzpicture}[scale=0.8]
            \node[rotate=36, transform shape, regular polygon, regular polygon sides=5, draw, minimum size=4cm] (pentagon) {};
            
            \filldraw (pentagon.corner 1) circle (1.5pt) node[anchor=east] {$abi$};
            \filldraw (pentagon.corner 2) circle (1.5pt);
            \filldraw (pentagon.corner 3) circle (1.5pt);
            \filldraw (pentagon.corner 4) circle (1.5pt);
            \filldraw (pentagon.corner 5) circle (1.5pt) node[anchor=west] {$abj$};

            \draw[fill=red, draw=none]($(pentagon.corner 1)!0.5!(pentagon.corner 5)$) circle (2pt) node[red,anchor=north] {$aij$};
            \draw[
                postaction={
                    decorate,
                    decoration={
                        markings,
                        mark=at position 0.5 with {\node[fill=red, circle, inner sep=1pt, label={[red]above:{$bij$}}] {};}
                    }
                }
            ] (pentagon.corner 1) to[out=60, in=120] (pentagon.corner 5);

            \draw (pentagon.corner 1) to[out=210, in=120] (pentagon.corner 2);

            \draw (pentagon.corner 1) to[out=280, in=120] (pentagon.corner 3);
            \draw (pentagon.corner 1) to[out=300, in=100] (pentagon.corner 3);

            \draw (pentagon.corner 1) to[out=315, in=155] (pentagon.corner 4);
            \draw (pentagon.corner 1) to[out=330, in=135] (pentagon.corner 4);

            \draw (pentagon.corner 2) to[out=270, in=190] (pentagon.corner 3);

            \draw (pentagon.corner 2) to[out=350, in=190] (pentagon.corner 4);
            \draw (pentagon.corner 2) to[out=10, in=170] (pentagon.corner 4);

            \draw (pentagon.corner 2) to[out=45, in=205] (pentagon.corner 5);
            \draw (pentagon.corner 2) to[out=30, in=225] (pentagon.corner 5);

            \draw (pentagon.corner 3) to[out=350, in=270] (pentagon.corner 4);

            \draw (pentagon.corner 3) to[out=60, in=260] (pentagon.corner 5);
            \draw (pentagon.corner 3) to[out=80, in=240] (pentagon.corner 5);

            \draw (pentagon.corner 4) to[out=60, in=330] (pentagon.corner 5);
    \end{tikzpicture}
	\caption{The underlying graph $X'$ for $\tConf{5}$.
    }
	\label{fig:theta5-cochain-doubledK5}
\end{figure}

We need to show that the second condition of Theorem~\ref{thm:lasheras-cube} holds. That is, we need to find an explicit family of embeddings $\Phi = \{\phi_v\}_{v\in\tConf{5}}$ into $\mathbb{R}^2$ such that the associated cochain $\omega_\Phi$ is trivial. 

Observe from Figure~\ref{fig:theta5-links-ijk} that the degree of each $w \in \Lk{(ijk, \tConf{5})}$ is two. So the edges $\overline{(ijk)(aij)}$ or $\overline{(ijk)(bij)}$ in $\tConf{5}$ are contained in exactly two faces and therefore do not contribute to the cocycle associated to any embedding of links. 

Thus, we need to embed the links of vertices of the form $aij$, $bij$, and $abi$ so that the associated cochain is trivial. First, consider the subgraph $X'$ of the $1$-skeleton of $\tConf{5}$ as in Definition~\ref{def:lasheras-cochain}. Each vertex of the type $abi$ is joined to the vertices $aij$ and $bij$ for $i \neq j$, which in turn are joined to the vertex $abj$. This is true for all  $i \ne j \in \{1,2,3,4,5\}$. Thus, $X'$ is isomorphic to a barycentrically-subdivided double-edged $K_5$ (see Figure~\ref{fig:theta5-cochain-doubledK5}). If we find a family of embeddings $\Phi$ of the links of the vertices of $X'$ such that the associated $\omega_\Phi(e) = 0$ for each edge $e$ of $X'$, we will be done.

\begin{figure}[!t]
	\centering
    \begin{subfigure}[!t]{\textwidth}
    \centering
        \begin{tikzpicture}[scale=0.75]
            \coordinate (a12) at (0, 0);
            \coordinate (ab2) at (4, 0);
            \coordinate (ab1) at (-4, 0);
            \coordinate (a13) at (-5, 1.5);
            \coordinate (a14) at (-3, 0.75);
            \coordinate (a15) at (-4, -1.5);
            \coordinate (a23) at (3, 1.5);
            \coordinate (a24) at (5, 0.75);
            \coordinate (a25) at (4, -1.5);
            \coordinate (123) at (-1, 1.5);
            \coordinate (124) at (1, 0.75);
            \coordinate (125) at (0, -1.5);
			\draw (a12) -- (ab1) -- (a13) -- (123) -- (a12) -- (ab1) -- (a14) -- (124) -- (a12) -- (ab1) -- (a15) -- (125) -- (a12);
			\draw (a12) -- (ab2) -- (a23) -- (123) -- (a12) -- (ab2) -- (a24) -- (124) -- (a12) -- (ab2) -- (a25) -- (125) -- (a12);
            
			\filldraw[black] (a12) circle (1.5pt) node[anchor=north west] {$a12$};
			\filldraw[black] (ab2) circle (1.5pt) node[anchor=west] {$ab2$};
			\filldraw[black] (ab1) circle (1.5pt) node[anchor=east] {$ab1$};
			\filldraw[black] (a13) circle (1.5pt) node[anchor=south] {$a13$};
			\filldraw[black] (a14) circle (1.5pt) node[anchor=south] {$a14$};
			\filldraw[black] (a15) circle (1.5pt) node[anchor=north] {$a15$};
			\filldraw[black] (a23) circle (1.5pt) node[anchor=south] {$a23$};
			\filldraw[black] (a24) circle (1.5pt) node[anchor=south] {$a24$};
			\filldraw[black] (a25) circle (1.5pt) node[anchor=north] {$a25$};
			\filldraw[black] (123) circle (1.5pt) node[anchor=south] {$123$};
			\filldraw[black] (124) circle (1.5pt) node[anchor=south] {$124$};
			\filldraw[black] (125) circle (1.5pt) node[anchor=north] {$125$};

        \end{tikzpicture}
		\caption{Local neighborhood of $a12$ in $\tConf{5}$}
		\label{fig:}
    \end{subfigure}
    \vspace{0.2cm}
    
    \centering
	\begin{subfigure}[!t]{0.3\textwidth}
		\centering
        
		\begin{tikzpicture}[scale=0.5]
			\node[regular polygon, regular polygon sides=4, draw, minimum size=3.9cm] (osquare) {};
			\node[regular polygon, regular polygon sides=4, draw, minimum size=1.5cm] (isquare) {};
            
			\draw (osquare.corner 1) -- (isquare.corner 1);
			\draw (osquare.corner 2) -- (isquare.corner 2);
			\draw (osquare.corner 3) -- (isquare.corner 3);
			\draw (osquare.corner 4) -- (isquare.corner 4);
            
			\filldraw (osquare.corner 1) circle (2.5pt) node[anchor=south] {$b1\color{blue}5$};
			\filldraw (osquare.corner 2) circle (2.5pt) node[anchor=south] {$a12$};
			\filldraw (osquare.corner 3) circle (2.5pt) node[anchor=north] {$b1\color{blue}3$};
			\filldraw (osquare.corner 4) circle (2.5pt) node[anchor=north] {$a14$};
			\filldraw (isquare.corner 1) circle (2.5pt) node[anchor=west] {$a13$};
			\filldraw (isquare.corner 2) circle (2.5pt) node[anchor=east] {$b1\color{blue}4$};
			\filldraw (isquare.corner 3) circle (2.5pt) node[anchor=east] {$a15$};
			\filldraw (isquare.corner 4) circle (2.5pt) node[anchor=west] {$b12$};

            \coordinate (a12) at (osquare.corner 2);
            \coordinate (b15) at (osquare.corner 1);
            \coordinate (b14) at (isquare.corner 2);
            \coordinate (b13) at (osquare.corner 3);
			\draw [red, opacity=1] 
				pic [draw=blue, angle radius=0.6cm, <-] {angle = b14--a12--b13};
		\end{tikzpicture}
		\caption{$\Lk{(ab1)}$}
		\label{fig:theta5-links-ab1}
	\end{subfigure}
	\hfill
	\begin{subfigure}[!t]{0.3\textwidth}
		\centering
		\begin{tikzpicture}[scale=0.75, rotate=-90]
            \coordinate (ab2) at (0, 2.2);
            \coordinate (ab1) at (0, -2.2);
            \coordinate (123) at (-1.5, 0);
            \coordinate (124) at (0.5, 0);
            \coordinate (125) at (2, 0);
			\draw (ab2) -- (123) -- (ab1) -- (124) -- (ab2) -- (125) -- (ab1);
            
			\filldraw[black] (ab2) circle (1.5pt) node[anchor=west] {$ab2$};
			\filldraw[black] (ab1) circle (1.5pt) node[anchor=east] {$ab1$};
			\filldraw[black] (123) circle (1.5pt) node[anchor=south] {$12\color{teal}3$};
			\filldraw[black] (124) circle (1.5pt) node[anchor=south] {$12\color{teal}4$};
			\filldraw[black] (125) circle (1.5pt) node[anchor=north] {$12\color{teal}5$};
            
			\draw [red, opacity=1] 
				pic [draw=red, angle radius=0.7cm, <-] {angle = 125--ab1--123};
			\draw [blue, opacity=1] 
				pic [draw=blue, angle radius=0.7cm, <-] {angle = 124--ab2--123};
		\end{tikzpicture}
		\caption{$\Lk{(a12)}$}
		\label{fig:theta5-links-a12}
	\end{subfigure}
	\hfill
	\begin{subfigure}[!t]{0.3\textwidth}
		\centering
		\begin{tikzpicture}[scale=0.5]
			\node[regular polygon, regular polygon sides=4, draw, minimum size=3.9cm] (osquare) {};
			\node[regular polygon, regular polygon sides=4, draw, minimum size=1.5cm] (isquare) {};
            
			\draw (osquare.corner 1) -- (isquare.corner 1);
			\draw (osquare.corner 2) -- (isquare.corner 2);
			\draw (osquare.corner 3) -- (isquare.corner 3);
			\draw (osquare.corner 4) -- (isquare.corner 4);
            
			\filldraw (osquare.corner 1) circle (2.5pt) node[anchor=south] {$b2\color{red}4$};
			\filldraw (osquare.corner 2) circle (2.5pt) node[anchor=south] {$a23$};
			\filldraw (osquare.corner 3) circle (2.5pt) node[anchor=north] {$b2\color{red}5$};
			\filldraw (osquare.corner 4) circle (2.5pt) node[anchor=north] {$a12$};
			\filldraw (isquare.corner 1) circle (2.5pt) node[anchor=west] {$a25$};
			\filldraw (isquare.corner 2) circle (2.5pt) node[anchor=east] {$b12$};
			\filldraw (isquare.corner 3) circle (2.5pt) node[anchor=east] {$a24$};
			\filldraw (isquare.corner 4) circle (2.5pt) node[anchor=west] {$b2\color{red}3$};
            
            \coordinate (a12) at (osquare.corner 4);
            \coordinate (b24) at (osquare.corner 1);
            \coordinate (b23) at (isquare.corner 4);
            \coordinate (b25) at (osquare.corner 3);
			\draw [blue, opacity=1] 
				pic [draw=red, angle radius=0.6cm, <-] {angle = b25--a12--b23};
		\end{tikzpicture}
		\caption{$\Lk{(ab2)}$}
		\label{fig:theta5-links-ab2}
	\end{subfigure}
    \vspace{0.5cm}
    
	\begin{subfigure}[!t]{0.65\textwidth}
		\centering
		\begin{tikzpicture}[scale=0.75]
			\coordinate (ab1) at (0, 0);
			\coordinate (a12) at (2, 0);
			\coordinate (ab2) at (4, 0);
			
			\draw (ab1) edge node[red,anchor=south] {0} (a12);
            \draw (a12) edge node[red,anchor=south] {0} (ab2);
			
			\filldraw[black] (ab1) circle (2.5pt) node[anchor=east] {$ab1$};
			\filldraw[red] (a12) circle (2.5pt) node[anchor=north] {$a12$};
			\filldraw[black] (ab2) circle (2.5pt) node[anchor=west] {$ab2$};
		\end{tikzpicture}
        \hspace{0.7cm}\tikz[baseline=-1\baselineskip]\draw[ultra thick,<->] (0,0) -- ++ (0.7,0);\qquad
        \begin{tikzpicture}[scale=0.75]
			\coordinate (K5ab1) at (7, 0);
			\coordinate (K5ab2) at (10, 0);
			
			\draw (K5ab1) edge node[red,anchor=south] {0} (K5ab2);
			
			\filldraw[black] (K5ab1) circle (2.5pt) node[anchor=north] {$ab1$};
			\filldraw[black] (K5ab2) circle (2.5pt) node[anchor=north] {$ab2$};
		\end{tikzpicture}
	\label{fig:theta5-edgevalue-ab1-a12-ab2}
	\end{subfigure}
	\caption{The orderings $\theta_{\phi_{a12}}(ab1) = (345)$ and $\theta_{\phi_{a12}}(ab2) = (354)$ are opposite, $\theta_{\phi_{ab1}}(a12) = (354)$ and $\theta_{\phi_{a12}}(ab1) = (345)$ are opposite, and $\theta_{\phi_{ab2}}(a12) = (345)$ and $\theta_{\phi_{a12}}(ab2) = (354)$ are opposite. The path $\overline{(ab1)(ab2)}$ in $X'$ is assigned $0$ in the cochain as the cyclic orderings at $a12$ in $\Lk{(ab1, \tConf{5}))}$ and in $\Lk{(ab2, \tConf{5})}$ are opposite.}
	\label{fig:theta5-orderingscochain-ab1-a12-ab2}
\end{figure}

We claim it suffices to construct embeddings for vertices $abi$ for all $i \in \{1, 2, 3, 4, 5\}$ so that $\theta_{\phi_{abi}}(aij)$ and $\theta_{\phi_{abj}}(aij)$ are opposite. Indeed, for any embedding $\phi_{aij}$ of $\Lk(aij, \tConf{5})$, the clockwise orientations of $abi$ and $abj$, denoted by $\theta_{\phi_{aij}}(abi)$ and $\theta_{\phi_{aij}}(abj)$, are opposite for $i \neq j$ (for example, see Figure~\ref{fig:theta5-links-a12}). To get $\omega_\Phi (\overline{(abi)(aij)}) = 0$ and $\omega_\Phi (\overline{(abj)(aij)}) = 0$ (see Figure~\ref{fig:theta5-orderingscochain-ab1-a12-ab2} for an explicit example involving the vertices $a12$, $ab1$, and $ab2$), the orientation $\theta_{\phi_{abi}}(aij)$ should be opposite to $\theta_{\phi_{aij}}(abi)$ and the orientation $\theta_{\phi_{abj}}(aij)$ should be opposite to $\theta_{\phi_{aij}}(abj)$ (see Figures~\ref{fig:theta5-links-a12},~\ref{fig:theta5-links-ab1}, and~\ref{fig:theta5-links-ab2}). Thus, we need the clockwise orientations $\theta_{\phi_{abi}}(aij)$ and $\theta_{\phi_{abj}}(aij)$ of $aij$ in $\Lk(abi,\tConf{5})$ and $\Lk(abj,\tConf{5})$, respectively, to be opposite. 
By symmetry, the same is true for the vertex $bij$.
Each edge of $X'$ is of the type $\overline{(abi)(aij)}$ or $\overline{(abj)(bij)}$. Therefore, to get $\omega_\Phi(e) = 0$ for each edge $e$ of $X'$, we need to find a family of embeddings $\Phi = \{\phi_v : \Lk(v, \tConf{5}) \longrightarrow \mathbb{R}^2\ | \text{ } v = abi \text{ for } 1 \leq i \leq 5\}$ such that the cyclic orderings $\theta_{\phi_{abi}}(aij)$ and $\theta_{\phi_{abi}}(aij)$ are opposite for all $1 \leq i,j \leq 5$.

\begin{figure}[!t]
	\centering
	\begin{subfigure}[!t]{0.3\textwidth}
		\centering
		\begin{tikzpicture}[scale=0.5]
			\node[regular polygon, regular polygon sides=4, draw, minimum size=3.9cm] (osquare) {};
			\node[regular polygon, regular polygon sides=4, draw, minimum size=1.5cm] (isquare) {};
            
			\draw (osquare.corner 1) -- (isquare.corner 1);
			\draw (osquare.corner 2) -- (isquare.corner 2);
			\draw (osquare.corner 3) -- (isquare.corner 3);
			\draw (osquare.corner 4) -- (isquare.corner 4);
            
			\filldraw (osquare.corner 1) circle (2.5pt) node[anchor=south] {$b15$};
			\filldraw (osquare.corner 2) circle (2.5pt) node[anchor=south] {$a12$};
			\filldraw (osquare.corner 3) circle (2.5pt) node[anchor=north] {$b13$};
			\filldraw (osquare.corner 4) circle (2.5pt) node[anchor=north] {$a14$};
			\filldraw (isquare.corner 1) circle (2.5pt) node[anchor=west] {$a13$};
			\filldraw (isquare.corner 2) circle (2.5pt) node[anchor=east] {$b14$};
			\filldraw (isquare.corner 3) circle (2.5pt) node[anchor=east] {$a15$};
			\filldraw (isquare.corner 4) circle (2.5pt) node[anchor=west] {$b12$};
		\end{tikzpicture}
		\caption{$\Lk{(ab1)}$}
		\label{fig:theta5-embedding-ab1}
	\end{subfigure}
	\hfill
	\begin{subfigure}[!t]{0.3\textwidth}
		\centering
		\begin{tikzpicture}[scale=0.5]
			\node[regular polygon, regular polygon sides=4, draw, minimum size=3.9cm] (osquare) {};
			\node[regular polygon, regular polygon sides=4, draw, minimum size=1.5cm] (isquare) {};
            
			\draw (osquare.corner 1) -- (isquare.corner 1);
			\draw (osquare.corner 2) -- (isquare.corner 2);
			\draw (osquare.corner 3) -- (isquare.corner 3);
			\draw (osquare.corner 4) -- (isquare.corner 4);
            
			\filldraw (osquare.corner 1) circle (2.5pt) node[anchor=south] {$b24$};
			\filldraw (osquare.corner 2) circle (2.5pt) node[anchor=south] {$a23$};
			\filldraw (osquare.corner 3) circle (2.5pt) node[anchor=north] {$b25$};
			\filldraw (osquare.corner 4) circle (2.5pt) node[anchor=north] {$a12$};
			\filldraw (isquare.corner 1) circle (2.5pt) node[anchor=west] {$a25$};
			\filldraw (isquare.corner 2) circle (2.5pt) node[anchor=east] {$b12$};
			\filldraw (isquare.corner 3) circle (2.5pt) node[anchor=east] {$a24$};
			\filldraw (isquare.corner 4) circle (2.5pt) node[anchor=west] {$b23$};
		\end{tikzpicture}
		\caption{$\Lk{(ab2)}$}
		\label{fig:theta5-embedding-ab2}
	\end{subfigure}
	\hfill
	\begin{subfigure}[!t]{0.3\textwidth}
		\centering
		\begin{tikzpicture}[scale=0.5]
			\node[regular polygon, regular polygon sides=4, draw, minimum size=3.9cm] (osquare) {};
			\node[regular polygon, regular polygon sides=4, draw, minimum size=1.5cm] (isquare) {};
            
			\draw (osquare.corner 1) -- (isquare.corner 1);
			\draw (osquare.corner 2) -- (isquare.corner 2);
			\draw (osquare.corner 3) -- (isquare.corner 3);
			\draw (osquare.corner 4) -- (isquare.corner 4);
            
			\filldraw (osquare.corner 1) circle (2.5pt) node[anchor=south] {$b35$};
			\filldraw (osquare.corner 2) circle (2.5pt) node[anchor=south] {$a31$};
			\filldraw (osquare.corner 3) circle (2.5pt) node[anchor=north] {$b32$};
			\filldraw (osquare.corner 4) circle (2.5pt) node[anchor=north] {$a34$};
			\filldraw (isquare.corner 1) circle (2.5pt) node[anchor=west] {$a32$};
			\filldraw (isquare.corner 2) circle (2.5pt) node[anchor=east] {$b34$};
			\filldraw (isquare.corner 3) circle (2.5pt) node[anchor=east] {$a35$};
			\filldraw (isquare.corner 4) circle (2.5pt) node[anchor=west] {$b31$};
		\end{tikzpicture}
		\caption{$\Lk{(ab3)}$}
		\label{fig:theta5-embedding-ab3}
	\end{subfigure}

	\begin{subfigure}[!t]{0.4\textwidth}
		\centering
		\begin{tikzpicture}[scale=0.5]
			\node[regular polygon, regular polygon sides=4, draw, minimum size=3.9cm] (osquare) {};
			\node[regular polygon, regular polygon sides=4, draw, minimum size=1.5cm] (isquare) {};
            
			\draw (osquare.corner 1) -- (isquare.corner 1);
			\draw (osquare.corner 2) -- (isquare.corner 2);
			\draw (osquare.corner 3) -- (isquare.corner 3);
			\draw (osquare.corner 4) -- (isquare.corner 4);
            
			\filldraw (osquare.corner 1) circle (2.5pt) node[anchor=south] {$b43$};
			\filldraw (osquare.corner 2) circle (2.5pt) node[anchor=south] {$a41$};
			\filldraw (osquare.corner 3) circle (2.5pt) node[anchor=north] {$b42$};
			\filldraw (osquare.corner 4) circle (2.5pt) node[anchor=north] {$a45$};
			\filldraw (isquare.corner 1) circle (2.5pt) node[anchor=west] {$a25$};
			\filldraw (isquare.corner 2) circle (2.5pt) node[anchor=east] {$b45$};
			\filldraw (isquare.corner 3) circle (2.5pt) node[anchor=east] {$a34$};
			\filldraw (isquare.corner 4) circle (2.5pt) node[anchor=west] {$b41$};
		\end{tikzpicture}
		\caption{$\Lk{(ab4)}$}
		\label{fig:theta5-embedding-ab4}
	\end{subfigure}
    \hspace{0.3cm}
	\begin{subfigure}[!t]{0.4\textwidth}
		\centering
		\begin{tikzpicture}[scale=0.5]
			\node[regular polygon, regular polygon sides=4, draw, minimum size=3.9cm] (osquare) {};
			\node[regular polygon, regular polygon sides=4, draw, minimum size=1.5cm] (isquare) {};
            
			\draw (osquare.corner 1) -- (isquare.corner 1);
			\draw (osquare.corner 2) -- (isquare.corner 2);
			\draw (osquare.corner 3) -- (isquare.corner 3);
			\draw (osquare.corner 4) -- (isquare.corner 4);
            
			\filldraw (osquare.corner 1) circle (2.5pt) node[anchor=south] {$b54$};
			\filldraw (osquare.corner 2) circle (2.5pt) node[anchor=south] {$a51$};
			\filldraw (osquare.corner 3) circle (2.5pt) node[anchor=north] {$b52$};
			\filldraw (osquare.corner 4) circle (2.5pt) node[anchor=north] {$a53$};
			\filldraw (isquare.corner 1) circle (2.5pt) node[anchor=west] {$a52$};
			\filldraw (isquare.corner 2) circle (2.5pt) node[anchor=east] {$b53$};
			\filldraw (isquare.corner 3) circle (2.5pt) node[anchor=east] {$a54$};
			\filldraw (isquare.corner 4) circle (2.5pt) node[anchor=west] {$b51$};
		\end{tikzpicture}
		\caption{$\Lk{(ab5)}$}
		\label{fig:theta5-embedding-ab5}
	\end{subfigure}
	\caption{A family of embeddings of $\Lk(abi,\tConf{5})$ into $\mathbb{R}^2$ so that the associated cochain is trivial.}
	\label{fig:theta5-explicitembedding}
\end{figure}
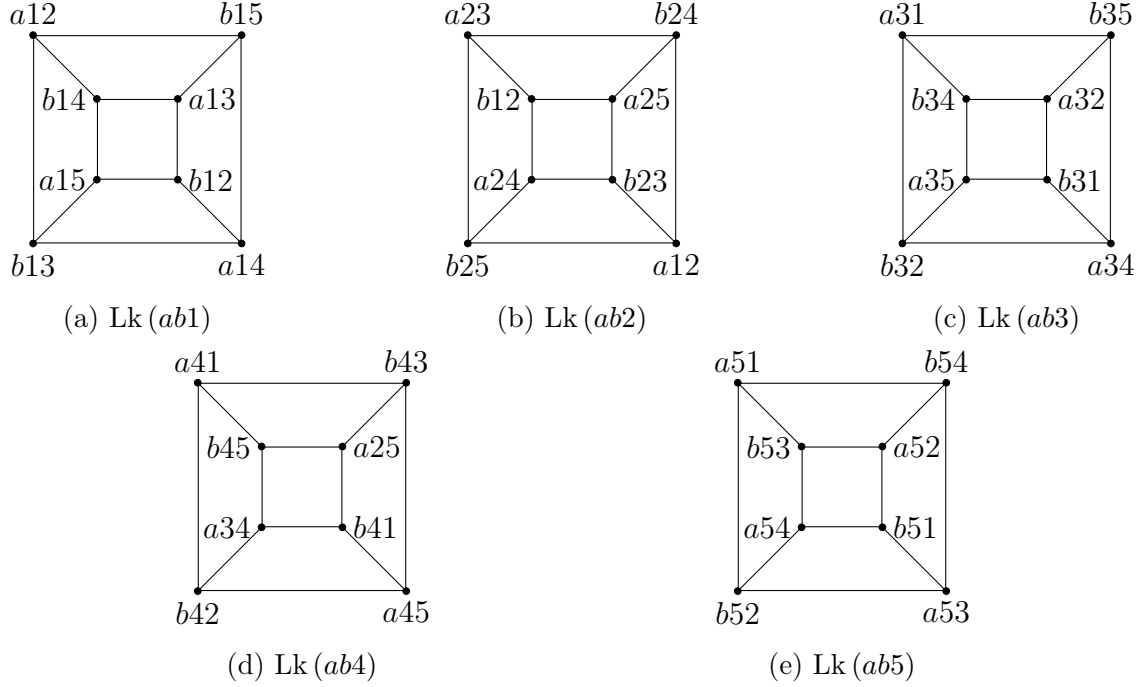

One such family of embeddings that satisfies this requirement is shown in Figure~\ref{fig:theta5-explicitembedding}. The cyclic orderings of each vertex of type $aij$ (and $bij$) in any two links $\Lk{(abi, \tConf{5}))}$ and $\Lk{(abj, \tConf{5})})$ are opposite; therefore, all edges in the underlying graph $X'$ are assigned zero in the cocycle. 
\end{proof}

\begin{remark}
    Given one embedding of $\Lk(ab1, \tConf{5})$ as in Figure~\ref{fig:theta5-explicitembedding}, any element of the symmetric group $S_5$ which takes $1$ to $i$ gives a new embedding of $\Lk(abi, \tConf{5})$. The embeddings that we constructed were obtained by repeatedly applying the cyclic permutation $(12345)$. We believe that applying any element of the alternating group $A_5$ should produce an embedding of $\Lk(abi, \tConf{5})$ so that the resulting Lasheras' cocycle is trivial. 
\end{remark}


\section{\texorpdfstring{$B_3(\Theta_7)$ is not a $3$-manifold group}{B3(Theta7) is not a 3-manifold group}}

\noindent
In this section, we show that $B_3(\Theta_7)$ is not the fundamental group of a $3$-manifold. In fact, we show this for any group quasi-isometric to $B_3(\Theta_7)$; in particular, $B_3(\Theta_7)$ is not virtually a $3$-manifold group. To do so, we will find an obstruction to the group being a $3$-manifold in its boundary, which is defined as follows.

\begin{Definition}
    The boundary of a CAT($0$) space $X$, denoted by $\partial_\infty X$, is the set of equivalence classes of geodesic rays where two rays $\alpha(t)$ and $\beta(t)$ are equivalent if and only if there is a constant $k$ such that $d(\alpha(t),\beta(t))\leq k$ for all $t \ge 0$. 
    The boundary $\partial_\infty X$ carries a natural topology, the \textit{cone topology}, with respect to which $\overline{X} = X \cup \partial_\infty X$ is compact when $X$ is proper (see~\cite[Part~II.8]{MR1744486} for more details).
    
    The visual boundary of a CAT($0$) group is defined as the boundary of a CAT($0$) space on which it acts properly discontinuously and cocompactly. This is well-defined for hyperbolic CAT($0$) groups and is invariant under quasi-isometry. 
\end{Definition}

\noindent Theorem~\ref{thm:main2} then follows from the following theorem adapted from the work of Bestvina--Kapovich--Kleiner~\cite[Theorem~1 and Corollary~19]{MR1933584}. 

\begin{Theorem}
    Let $G$ be a hyperbolic CAT($0$) group with visual boundary $\partial_\infty G$. If there is a non-planar graph embedded in $\partial_\infty G$, then $G$ is not a $3$-manifold group.
\end{Theorem}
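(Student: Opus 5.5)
We argue by contradiction. Suppose $G=\pi_1(M)$ for some $3$-manifold $M$, and that $\partial_\infty G$ contains an embedded non-planar graph; by Kuratowski's theorem it contains a subdivided $K_{3,3}$ or $K_5$.

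\textbf{Step 1: reduce to a compact manifold.} Since $G$ is hyperbolic and CAT($0$), it is finitely presented and torsion-free. By the Scott core theorem we may replace $M$ with a compact $3$-manifold having the same fundamental group. After passing to the orientation double cover and a finite-index subgroup, which changes neither the boundary nor hyperbolicity, we may assume $M$ is orientable. Torsion-freeness lets us rule out fake projective-plane issues and use the prime decomposition.

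\textbf{Step 2: handle the splitting.} Write $M$ as a connected sum of prime pieces, giving a free product decomposition of $G$. The boundary of a free product of hyperbolic groups is built from copies of the factor boundaries together with Cantor-set points. Any embedded graph containing a cycle lies in a single factor boundary, by the connectivity and tree-graded structure described in \cite{MR1933584}. So we may assume $G$ is freely indecomposable and $M$ is irreducible.

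\textbf{Step 3: apply the classification.} If $\partial M=\emptyset$, then $M$ is closed hyperbolic by geometrization, since $G$ is hyperbolic and infinite. Hence $\partial_\infty G\cong S^2$, and every graph embedded in $S^2$ is planar, a contradiction. If $\partial M\neq\emptyset$, then after removing tori (hyperbolicity forbids $\mathbb{Z}^2$ subgroups, so boundary tori are excluded) and compressing, $M$ has incompressible higher-genus boundary and admits a convex cocompact hyperbolic structure. Then $\partial_\infty G$ embeds in $S^2$ as the limit set, again a planar set, so it contains no non-planar graph.

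This is exactly the content of \cite[Theorem~1 and Corollary~19]{MR1933584}, so in the paper I would cite it rather than reprove it. The main obstacle is the boundary case together with the free-product reduction: one must show that the boundary of a hyperbolic $3$-manifold group always embeds in $S^2$. This requires geometrization, or Thurston's hyperbolization for Haken manifolds with boundary, and that is the step I would defer entirely to Bestvina--Kapovich--Kleiner.
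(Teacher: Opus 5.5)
Your argument is correct, and your final plan (cite \cite{MR1933584}) is exactly what the paper does. However, the sketch you give is a genuinely different proof from the one in that reference. You should not call it ``exactly the content'' of Bestvina--Kapovich--Kleiner, nor use it to justify deferring the $S^2$-embedding step to them.

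Their argument is coarse-topological. Geodesic rays from a basepoint give a uniformly proper map of the cone on a non-planar graph $K\subset\partial_\infty G$ into $G$. The nonvanishing van Kampen obstruction of $K$ then shows that $G$ cannot act properly discontinuously on \emph{any} contractible $3$-manifold (obstructor dimension $\ge 4$). To pass to arbitrary $3$-manifold groups, they only need Scott's core theorem, Kneser--Milnor and the sphere theorem to land in an aspherical irreducible factor. Nowhere do they prove that boundaries of hyperbolic $3$-manifold groups embed in $S^2$.

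Your route instead uses geometrization in the closed case and Thurston's hyperbolization of Haken manifolds in the bounded case, realizing $\partial_\infty G$ as a limit set in $S^2$.
\begin{itemize}
\item Your approach gives a sharper, dimension-specific structural conclusion, but it rests on heavy machinery that BKK's argument avoids entirely.
\item Their approach needs no geometric structure, rules out every proper action on a contractible $3$-manifold (not only those arising from compact $3$-manifolds), and extends to higher-dimensional obstructor complexes.
\end{itemize}
If you keep your sketch, the key citations are Perelman and Thurston, not BKK.

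Two smaller corrections.
\begin{itemize}
\item ``Hyperbolic and CAT($0$) implies torsion-free'' is false under the paper's definition; a cocompact triangle group acting on $\mathbb{H}^2$ is a counterexample. You never need this claim: finite free factors have empty boundary, and an irreducible factor with infinite $\pi_1$ is aspherical, hence torsion-free.
\item The free-product reduction is not from BKK. It is the standard fact that each edge of the Bass--Serre tree separates $\partial_\infty G$ into two clopen sets. Hence a connected subset with more than one point lies in a single translate of a factor boundary.
\end{itemize}
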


\noindent
We find an embedded $K_{3,3}$ in the visual boundary $ \partial_\infty(B_3(\Theta_7))$. The following sequence of lemmas identify the quasi-convex subgroups of $B_3(\Theta_7)$ that give rise to this embedded $K_{3,3}$.

We will use the following result from~\cite[Theorem~1(2)]{MR2077673} on the links of the vertices of a subcomplex of a locally CAT($0$) cube complex to show the existence of such subgroups in $B_3(\Theta_7)$.

\begin{Lemma}
Let $A$ be a finite dimensional cube complex that is locally CAT($0$) and $B$ be a subcomplex of $A$. Then $B$ is a locally CAT($0$), $\pi_1$-injective subcomplex that lifts to an isometric embedding of $\widetilde{B}$ in $\widetilde{A} \iff \Lk(v,B)$ is a full subcomplex in $\Lk(v,A)$ for every vertex $v$ in $B$. 
\end{Lemma}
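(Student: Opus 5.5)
The plan is to prove both directions using Gromov's link condition together with the description of local geodesics in a cube complex through angles in vertex links. Throughout, $\Lk(v,A)$ carries its all-right spherical metric: every edge has length $\pi/2$, and by the link condition this is a CAT($1$) flag complex. The basic fact I will use is that a path in $A$ which passes through a vertex $v$ is a local geodesic at $v$ if and only if its incoming and outgoing directions are at angular distance $\geq \pi$ in $\Lk(v,A)$.

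\emph{($\Leftarrow$)}
1. A full subcomplex of a flag complex is flag. Hence each $\Lk(v,B)$ is flag, and $B$ is locally CAT($0$) by Gromov's criterion. So $\widetilde B$ is CAT($0$) with its induced piecewise Euclidean metric.
2. I will show that the natural map $\widetilde B\to\widetilde A$, induced by a lift of the inclusion, sends local geodesics to local geodesics.
   - Away from vertices this is automatic.
   - At a vertex $v$ the required statement is the following: if two points $x,y\in \Lk(v,B)$ satisfy $d_{\Lk(v,A)}(x,y)<\pi$, then $d_{\Lk(v,B)}(x,y)<\pi$.
3. Granting step 2, every local geodesic of $\widetilde B$ is a local geodesic of the CAT($0$) space $\widetilde A$, and local geodesics in a CAT($0$) space are geodesics. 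So the map is an isometric embedding.
4. An isometric embedding is injective. Since $\pi_1B$ acts on $\widetilde B$ by deck transformations compatibly with the map, the kernel of $\pi_1 B\to\pi_1A$ acts trivially on $\widetilde B$. This gives $\pi_1$-injectivity.

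\emph{Main obstacle: the angle statement in step 2.} My first attempt uses the structure of short geodesics in all-right flag complexes. A geodesic of length $<\pi$ from $x$ to $y$ in $\Lk(v,A)$ should lie in the union of the closed stars of the open simplices supporting $x$ and $y$; more precisely, it should pass only through simplices spanned by vertices of $\mathrm{supp}(x)\cup\mathrm{supp}(y)$ together with vertices adjacent to them.

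I would prove this by induction on the number of simplices the geodesic crosses, using that in an all-right complex the distance from a point to a vertex $u$ is $\geq\pi/2$ once one leaves the open star of $u$. I then want to reduce to the case where $\mathrm{supp}(x)\cup\mathrm{supp}(y)$, possibly together with one extra vertex, spans a simplex or a pair of adjacent simplices of $\Lk(v,A)$. Fullness transfers such a configuration into $\Lk(v,B)$ whenever its vertices lie in $\Lk(v,B)$. The delicate point is the case where the geodesic uses an intermediate vertex of $A$ that is not in $B$. For this I would instead use the distance formula for all-right flag complexes: the distance is $<\pi$ exactly when the supports are joined appropriately by edges, and fullness lets me read this condition off entirely inside $B$.

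\emph{($\Rightarrow$)} Suppose $\Lk(v,B)$ is not full in $\Lk(v,A)$. Since $\Lk(v,B)$ is flag (because $B$ is locally CAT($0$)), a failure of fullness must already occur along an edge. That is, there are vertices $x,y\in\Lk(v,B)$ that are adjacent in $\Lk(v,A)$ but not adjacent in $\Lk(v,B)$.

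Distinct non-adjacent vertices of an all-right flag complex are at distance $\geq\pi$. So the two edges of $B$ at $v$ corresponding to $x$ and $y$ form a local geodesic, and hence a geodesic of length $2$ in $\widetilde B$. In $\widetilde A$, however, these two edges span a square, so their endpoints are at distance $\sqrt2$. This contradicts the assumption that $\widetilde B\to\widetilde A$ is an isometric embedding.
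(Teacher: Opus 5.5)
\paragraph{Verdict.} The paper gives no proof of this lemma; it is quoted from \cite[Theorem~1(2)]{MR2077673}. Your overall route is the standard one behind that citation: full links make local geodesics of $\widetilde B$ local geodesics of $\widetilde A$, hence geodesics, giving an isometric embedding, and $\pi_1$-injectivity follows from freeness of the deck action. Your ($\Rightarrow$) direction is fine (reduce non-fullness to an edge via flagness, then use the $\sqrt2<2$ square diagonal), and so are steps 1, 3, 4 of ($\Leftarrow$). But there is a genuine gap: the one step carrying all the content, the angle statement in step 2, is never proved, and the sketch you give for it does not work.

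\paragraph{Why the sketch fails.} Write $L=\Lk(v,A)$ and $K=\Lk(v,B)$.
\begin{itemize}
\item Your first structural claim is false as stated. Take an all-right pentagon $v_1\cdots v_5$, with $x$ in the open edge $v_1v_2$ near $v_2$ and $y$ in the open edge $v_3v_4$ near $v_3$. Then $d(x,y)<\pi$, and the geodesic runs along $v_2v_3$, which lies in neither closed star of the supporting edges.
\item The weakened version, which also allows vertices adjacent to the supports, is true but does not help. It is defeated by exactly the intermediate vertices outside $B$ that you flag as delicate.
\item The fallback ``distance formula'' does not exist. In the same example, $d(x,y)<\pi$ iff $d(x,v_2)+d(v_3,y)<\pi/2$. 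That is a metric condition, not a combinatorial condition on supports, so fullness cannot ``read it off'' unless you already know the short geodesic lies in $K$.
\end{itemize}

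\paragraph{The missing idea.} Such intermediate vertices never occur. Let $\gamma$ be an $L$-geodesic of length $<\pi$ from $x$ to $y$. Let $w$ be a vertex not in $\mathrm{supp}(x)\cup\mathrm{supp}(y)$, so $x$ and $y$ lie outside the open star of $w$.
\begin{itemize}
\item Suppose $\gamma$ meets that open star, and take a maximal subarc of $\gamma$ inside it. Its endpoints lie in $\Lk(w,L)$, at distance $\pi/2$ from $w$.
\item The closed star of $w$ is the spherical join of $w$ with $\Lk(w,L)$. So along a local geodesic avoiding $w$, the function $\cos d(w,\gamma(t))$ has the form $A\cos(t-t_0)$, just as for a great circle and the north pole of $S^2$.
\item This function is positive on the subarc and vanishes at both ends, so the subarc has length $\pi$. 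If the subarc passes through $w$, its length is at least $\pi/2+\pi/2$. Either way $\gamma$ has length at least $\pi$, a contradiction.
\item Hence every point of $\gamma$ has support spanned by vertices of $\mathrm{supp}(x)\cup\mathrm{supp}(y)$, and these vertices lie in $K$. Fullness puts every such simplex in $K$, so $\gamma\subset K$.
\end{itemize}
Therefore $d_K(x,y)=d_L(x,y)$ whenever $d_L(x,y)<\pi$. This $\pi$-convexity of full subcomplexes is what step 2 needs; flagness is not even used here.

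\paragraph{A smaller gap.} The local-geodesic condition is not automatic away from vertices. At an interior point of a $k$-cube $C$, the space of directions is $S^{k-1}*\Lk(C,\cdot)$. You must apply the same lemma to $\Lk(C,B)\subset\Lk(C,A)$. That inclusion is full, because it is the link of a simplex inside the full subcomplex $K\subset L$.
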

\noindent
As observed in~\cite[Corollary~3.14]{MR2701024} and ~\cite[Proposition~3.10]{MR4227231}, for configuration spaces of graphs, this translates to the following lemma.

\begin{Lemma}\label{lem:embedding}
    Let $\Gamma'$ be a subgraph of a connected graph $\Gamma$. Then the natural inclusion map $\cConf_n(\Gamma') \hookrightarrow \cConf_n(\Gamma)$ is $\pi_1$-injective. Moreover, the inclusion lifts to an isometric embedding of $\widetilde{\cConf_n{\Gamma'}}$ into $\widetilde{\cConf_n{\Gamma}}$. 
\end{Lemma}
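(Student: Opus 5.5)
The plan is to deduce Lemma~\ref{lem:embedding} from the preceding lemma, which characterizes locally CAT($0$), $\pi_1$-injective subcomplexes with isometrically embedded lifts by a fullness condition on links. Both $\cConf_n(\Gamma')$ and $\cConf_n(\Gamma)$ are finite-dimensional cube complexes that are locally CAT($0$), as recalled in the introduction. The inclusion $\Gamma' \hookrightarrow \Gamma$ induces an inclusion of $\cConf_n(\Gamma')$ as a subcomplex: a cube of $\cConf_n(\Gamma')$ is a collection of $n$ pairwise disjoint closed cells (vertices or edges) of $\Gamma'$, and this is also a cube of $\cConf_n(\Gamma)$. By the preceding lemma, it therefore suffices to show that for every vertex $v = v_1\cdots v_n$ of $\cConf_n(\Gamma')$, the link $\Lk(v, \cConf_n(\Gamma'))$ is a full subcomplex of $\Lk(v, \cConf_n(\Gamma))$.

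To do this, I would describe links as in Remark~\ref{rem:lk}, generalized to $n$ points. A vertex of $\Lk(v, \cConf_n(\Gamma))$ is an edge $e$ of $\Gamma$ containing exactly one $v_i$ and disjoint from the other $v_j$. A collection of such vertices spans a simplex exactly when the corresponding edges are pairwise disjoint and belong to distinct $v_i$'s. Since this is a pairwise condition, the link is a flag complex. The vertices of $\Lk(v, \cConf_n(\Gamma'))$ are exactly those link vertices whose edges lie in $\Gamma'$.

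Now take vertices $e_1, \dots, e_k$ of the subcomplex that span a simplex in the big link. Then the $e_j$ are pairwise disjoint edges of $\Gamma'$ based at distinct $v_i$. This is precisely the condition for them to span a cube, and hence a simplex in the link, inside $\cConf_n(\Gamma')$. So fullness holds, and the preceding lemma gives $\pi_1$-injectivity and the isometric embedding of universal covers.

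The main subtle point is connectivity. $\Gamma'$ need not be connected, and even when it is, $\cConf_n(\Gamma')$ may be disconnected. The statement should be read componentwise, applying the lemma to each component of $\cConf_n(\Gamma')$ with basepoints chosen there. I also expect to need to remark that the subdivision of $\Gamma'$ is the one inherited from $\Gamma$, so that the combinatorial configuration spaces are compatible. Beyond this, the verification is the routine bookkeeping above.
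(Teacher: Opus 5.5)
Your proposal is correct and follows essentially the paper's route: both reduce the lemma to the preceding link-fullness criterion and observe that a simplex of $\Lk(v,\cConf_n(\Gamma))$ whose vertices are edges of $\Gamma'$ is a family of pairwise disjoint edges of $\Gamma'$, hence already a cube of $\cConf_n(\Gamma')$. The paper additionally subdivides the edges of $\Gamma$ outside $\Gamma'$ so that $\Gamma'$ becomes an induced subgraph; your direct check shows this step is unnecessary (and it changes the cube structure on $\cConf_n(\Gamma)$), while your componentwise reading when $\cConf_n(\Gamma')$ is disconnected addresses a point the paper leaves implicit.
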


\begin{proof}
    To show that the inclusion map $\cConf_n(\Gamma') \hookrightarrow \cConf_n(\Gamma)$ is $\pi_1$-injective, it is enough to show that $\Lk(v,\cConf_n(\Gamma'))$ is full as a subcomplex of $\Lk(v,\cConf_n(\Gamma))$. Note that an $m$-simplex in the $\Lk(v,\cConf_n(\Gamma))$ where $v=\{v_1,v_2,\dots, v_n\}$ with each $v_i$ a vertex in $\Gamma$ corresponds to a collection of $m$ pairwise disjoint edges in $\Gamma$, each containing exactly one $v_i$. Without changing the homotopy type of $\cConf_n(\Gamma)$, we can barycentrically subdivide the edges of $\Gamma$ not in $\Gamma'$ so that $\Gamma'$ is an induced subgraph of $\Gamma$. A collection of edges disjoint in $\Gamma$, if they exist in $\Gamma'$, are also disjoint in $\Gamma'$. Therefore, if there is an $m$-simplex $C$ in $\Lk(v,\cConf_n(\Gamma))$ such that all its vertices are in $\Lk(v,\cConf_n(\Gamma'))$, then $C$ is in $\Lk(v,\cConf_n(\Gamma'))$.
\end{proof}

\begin{Corollary}\label{subgroup}
For $n\leq m$, the natural inclusion map $\tConf{n} \hookrightarrow \tConf{m}$ is $\pi_1$-injective that lifts to an isometric embedding of the universal covers $\widetilde{\tConf{n}}$ into $\widetilde{\tConf{m}}$. In particular, $B_3(\Theta_n)$ embeds into $B_3(\Theta_m)$. Moreover, this is a quasi-convex embedding which extends to a toplogical embedding of $\partial_\infty \widetilde{ \tConf{n}} \hookrightarrow \partial_\infty \widetilde{\tConf{m}}$ for $n\leq m$.
\end{Corollary}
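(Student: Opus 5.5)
Apply Lemma~\ref{lem:embedding} with $\Gamma' = \Theta_n$ and $\Gamma = \Theta_m$. For $n \le m$, $\Theta_n$ sits inside $\Theta_m$ as the subgraph spanned by the two suspension vertices $a,b$ and the first $n$ suspended paths. I will work with subdivisions chosen so that this inclusion is cellular: subdivide $\Theta_m$ admissibly and give $\Theta_n$ the restricted subdivision. Any further subdivision needed to make $\Theta_n$ an induced subgraph, or to make the subdivision of $\Theta_n$ admissible, only subdivides edges and so does not change the homotopy types (Prue--Scrimshaw). Lemma~\ref{lem:embedding} then gives the following three facts at once:
\begin{enumerate}[(i)]
\item $\tConf{n} \hookrightarrow \tConf{m}$ is $\pi_1$-injective;
\item $B_3(\Theta_n) \hookrightarrow B_3(\Theta_m)$ is injective;
\item the inclusion lifts to an isometric embedding $\widetilde{\tConf{n}} \hookrightarrow \widetilde{\tConf{m}}$ of CAT($0$) universal covers.
\end{enumerate}

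For quasi-convexity, I will use that the image of $\widetilde{\tConf{n}}$ is an isometrically embedded, hence geodesically convex, subspace of the CAT($0$) space $\widetilde{\tConf{m}}$. It is convex because geodesics in a CAT($0$) space are unique. This image is invariant under $B_3(\Theta_n)$, which acts on it properly and cocompactly, since the quotient $\tConf{n}$ is a finite complex. Fix a basepoint in the image. By the Švarc--Milnor lemma, the orbit map identifies the word metric on $B_3(\Theta_n)$ with the restricted metric up to quasi-isometry, so the subgroup is undistorted. The same applies to $B_3(\Theta_m)$ acting on $\widetilde{\tConf{m}}$, so $B_3(\Theta_n)$ is quasi-isometrically embedded in $B_3(\Theta_m)$. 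If $B_3(\Theta_m)$ is hyperbolic (as for $m \ge 4$ in Genevois' classification), quasi-isometric embedding is equivalent to quasi-convexity. Independently of hyperbolicity, the subspace is convex, so geodesics between orbit points stay in it, which is the CAT($0$) version of quasi-convexity.

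For the boundary statement, geodesic rays in the image are geodesic rays in $\widetilde{\tConf{m}}$, because the embedding is isometric. Two such rays are asymptotic in one space exactly when they are asymptotic in the other, since the metric on the image is the restricted one. This gives a well-defined injective map $\partial_\infty\widetilde{\tConf{n}} \to \partial_\infty\widetilde{\tConf{m}}$. Using rays based at a common basepoint in the image, the basic cone-topology neighbourhoods of a ray in $\widetilde{\tConf{m}}$ restrict to exactly the basic neighbourhoods in the image, so the map is continuous. Alternatively, one can extend the inclusion to a continuous map of compactifications $\overline{\widetilde{\tConf{n}}} \to \overline{\widetilde{\tConf{m}}}$. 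The source boundary is compact because $\widetilde{\tConf{n}}$ is proper, and the target is Hausdorff, so a continuous injection between them is a topological embedding. In the hyperbolic setting one could instead cite the standard fact that quasi-convex subgroups have limit sets homeomorphic to their own Gromov boundary.

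The main obstacle is the bookkeeping for subdivisions. Lemma~\ref{lem:embedding} is stated for a subgraph of a single graph, and "the natural inclusion" must be realised cellularly with admissible subdivisions on both sides. This must be done without changing homotopy types or the CAT($0$) structures used for the boundaries. After that, the rest follows from isometric embeddings in CAT($0$) spaces and compactness.
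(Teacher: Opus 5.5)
Your proof is correct. The first half is exactly the paper's argument: apply Lemma~\ref{lem:embedding} with $\Theta_n\subset\Theta_m$ to get $\pi_1$-injectivity, the subgroup embedding, and the isometric embedding of universal covers.

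The routes differ at the boundary statement. The paper simply cites Gromov's result that a quasi-convex subgroup of a hyperbolic group has its boundary topologically embedded in the ambient boundary, and it leaves quasi-convexity implicit. You instead make undistortedness explicit via \v{S}varc--Milnor. You then prove the boundary embedding directly in CAT($0$) terms:
\begin{enumerate}[(i)]
\item the image is convex;
\item rays from a basepoint in the image are rays of $\widetilde{\tConf{m}}$;
\item asymptoticity is detected by the restricted metric;
\item basic cone-topology neighbourhoods restrict to basic neighbourhoods.
\end{enumerate}
Point (iv) already makes the map a homeomorphism onto its image, so your compact-to-Hausdorff step is only a backup.

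Your argument is self-contained and uses no hyperbolicity. It works for any convex subspace of a proper CAT($0$) space, and it proves the statement literally for the visual boundaries $\partial_\infty$ as the paper defines them. The paper's citation instead relies on hyperbolicity of $B_3(\Theta_m)$, which in fact holds for every $m$, not only $m\ge 4$. It also relies on identifying the visual and Gromov boundaries. In exchange, it states the embedding at the level of groups, which is the form used in the quasi-isometry-invariance step of Theorem~\ref{thm:main7}.

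The subdivision bookkeeping you flag as the main obstacle does not actually arise. In the minimal admissible subdivision the paper uses, the subgraph spanned by $a,b,1,\ldots,n$ is already induced in $\Theta_m$ and is itself the minimal admissible subdivision of $\Theta_n$. So no further subdivision is needed, which is fortunate: extra subdivision would change the very cube complexes whose universal covers the statement concerns.
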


\noindent The above corollary is a direct result of Lemma~\ref{lem:embedding} and the fact that a quasi-convex subgroup $H$ of a hyperbolic group $G$ leads to a topological embedding of the boundary $\partial_\infty H$ into $\partial_\infty G$~\cite[Section~5.3]{MR919829}.

For the cube complex $\tConf{7}$, this means that we can find $\pi_1$-injective subcomplexes in $\tConf{7}$ by taking subgraphs of $\Theta_7$. In particular, for every choice of two vertices $\tau_1,\tau_2 \in \{1,2, \dots, 7\}$  of $\Theta_7$, we have a non-trivial simple closed curve coming from $\tConf{2} \simeq S^1$ in $\tConf{7}$. We will denote this closed curve by $\Gamma(\tau_1\tau_2)$. Similarly, for every choice of four vertices $\lambda_1,\lambda_2,\lambda_3,\lambda_4$ in $\{1,2,\dots, 7\}$ of $\Theta_7$, we have a locally convex surface $\Gamma(\lambda_1\lambda_2\lambda_3\lambda_4) \simeq \Sigma_3$ sitting inside of $\cConf_3(\Theta_7)$. 
Simple closed curves in these subsurfaces in $\cConf_3(\Theta_7)$ lift to bi-infinite geodesics in the respective universal covers, and correspond to points in the boundary circles of the subsurfaces they are contained in, forming a subdivided $K_{3,3}$ embedded in the visual boundary of $B_3(\Theta_7)$. This is shown explicitly in the proof of Theorem~\ref{thm:main2}.

\begin{Theorem}\label{thm:main2}
    $B_3(\Theta_7)$ is not a $3$-manifold group.
\end{Theorem}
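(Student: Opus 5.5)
By the Bestvina--Kapovich--Kleiner criterion stated above, it suffices to embed $K_{3,3}$ (or a subdivision of it) in $\partial_\infty B_3(\Theta_7)$. Hyperbolicity of $B_3(\Theta_7)$ comes from Genevois' classification, since $\Theta_7$ is a pulsar graph. The CAT($0$) structure comes from $\widetilde{\tConf{7}}$.

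The building blocks are the quasi-convex subcomplexes given by Corollary~\ref{subgroup}. For each $4$-subset $\lambda \subset \{1,\dots,7\}$, the universal cover of $\Gamma(\lambda)\simeq\Sigma_3$ embeds isometrically as a copy of $\mathbb{H}^2$ (quasi-isometrically). Its boundary is therefore an embedded circle $S_\lambda \subset \partial_\infty$.

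For a $2$-subset $\tau$, the closed curve $\Gamma(\tau)$ lifts to a bi-infinite geodesic with two endpoints $\tau^{\pm}\in\partial_\infty$. If $\tau \subset \lambda$, then $\Gamma(\tau)\subset\Gamma(\lambda)$, so the points $\tau^\pm$ lie on $S_\lambda$. This requires choosing basepoint lifts compatibly, i.e.\ working in a single lift of $\widetilde{\Gamma(\lambda)}$ containing a chosen lift of the curve.

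The plan is to choose six geodesics playing the role of vertices and arcs of the circles $S_\lambda$ playing the role of edges. Concretely:
\begin{itemize}
\item Take the three ``left'' vertices to be endpoints $\tau^+$ for curves $\tau \in \{12, 34, 56\}$ (or a similar choice).
\item Take the three ``right'' vertices to be endpoints coming from further curves.
\item For each of the nine left--right pairs, pick a $4$-set $\lambda$ containing both pairs, e.g.\ $\lambda = 12\cup 34$ joins $12$ and $34$. The edge is then an arc of $S_\lambda$ between the corresponding endpoints.
\end{itemize}
Since a $4$-set can contain at most two disjoint pairs, the bipartite structure forces the pairs to be disjoint. With seven suspended vertices, this means a $K_{3,3}$ pattern of pairs in which each left pair is disjoint from each right pair. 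This is impossible with only $7$ symbols if all six pairs are distinct and disjoint. So I would instead use pairs that overlap, e.g.\ left $\{12,13,14\}$-type and right $\{56,57,67\}$-type, which are pairwise disjoint across sides; each union $\lambda$ is then a $4$-set in $\{1,\dots,7\}$.

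I would fix one lift: start from a basepoint vertex $v$ (say the configuration $ab7$ or a vertex common to all these subcomplexes) and take the lifts of every $\widetilde{\Gamma(\lambda)}$ and every geodesic $\widetilde{\Gamma(\tau)}$ through a common lift $\tilde v$. Then each relevant endpoint lies on all the appropriate circles.

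The main obstacle is ensuring that the nine arcs meet only at their endpoints. Two circles $S_\lambda$ and $S_{\lambda'}$ intersect in $\partial(\widetilde{\Gamma(\lambda)}\cap\widetilde{\Gamma(\lambda')})$. By convexity and the full-subcomplex description, this intersection is the lift of $\cConf_3(\Theta_{\lambda\cap\lambda'})$. When $|\lambda\cap\lambda'|\le 2$, this is a point or a circle's lift, i.e.\ a geodesic line (as for $\tConf{2}\simeq S^1$) or bounded. So $S_\lambda\cap S_{\lambda'}$ is empty or exactly the two endpoints of one geodesic $\widetilde{\Gamma(\tau)}$. I would verify that for the chosen lifts, intersections of distinct circles consist exactly of the designated vertex endpoints (or their antipodes $\tau^-$). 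I would then choose each arc of $S_\lambda$ between the two designated points to avoid the other designated points on $S_\lambda$. Since $\tau^\pm$ and $\sigma^\pm$ are four points on the circle $S_\lambda$, one needs them not to be interleaved badly; I expect to handle this by using both endpoints $\tau^\pm$ and allowing a subdivided $K_{3,3}$. If the count fails with these pair choices, the fallback is to enlarge the vertex set of the subdivided graph using $\tau^-$ points and the freedom of choosing arcs, checking planarity failure by exhibiting the $K_{3,3}$ minor directly.
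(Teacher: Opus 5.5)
\textbf{There is a genuine gap: the internal disjointness of your nine arcs is never established, and for your configuration it is unclear.} Your framework matches the paper's: the Bestvina--Kapovich--Kleiner criterion, boundary circles $S_\lambda$ of the $\Theta_4\subset\Theta_7$ surfaces, and endpoints of the $\Gamma(\tau)$ geodesics as vertices. But the step that actually produces an \emph{embedded} $K_{3,3}$ is that the nine arcs meet only at their endpoints. You do not carry it out, and your description of how the circles meet is wrong for your configuration.

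With left pairs $12,13,14$ and right pairs $56,57,67$, every one of your nine $4$-sets contains $1$. The only vertex common to all nine subcomplexes is $ab1$, so all lifts must go through one $\widetilde{ab1}$. The two kinds of pairwise intersection cause different problems.
\begin{itemize}
\item Eighteen of the $36$ pairs of $4$-sets share three strands, e.g.\ $1256\cap1257=125$ and $1256\cap1356=156$. Those two lifts then both contain a lift of $\tConf{3}$. Its fundamental group is torsion-free with Euler characteristic $-2$, hence non-elementary. So $S_{1256}\cap S_{1257}$ contains an infinite limit set, and that set accumulates at $12^{+}$: push any of its points by powers of the element translating along $\widetilde{\Gamma(12)}$. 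The three arcs leaving $12^+$ on $S_{1256},S_{1257},S_{1267}$ therefore lie on circles that pairwise share infinitely many points near their common endpoint, and nothing in your plan keeps them apart.
\item The other eighteen pairs share two strands, e.g.\ $1256\cap1357=15$. Those circles share the endpoints of $\widetilde{\Gamma(15)}$, which are not designated vertices, contrary to what you propose to verify.
\end{itemize}
Two further problems remain. You never locate the designated points relative to one another on any circle; your vertex curves such as $\Gamma(12)$ and $\Gamma(56)$ are disjoint, so nothing fixes a cyclic order. Also, no common basepoint exists for all six geodesics, since no vertex of $\tConf{7}$ lies on both $\Gamma(12)$ and $\Gamma(56)$. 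Deferring all of this to a ``fallback'' leaves out the heart of the proof.

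\textbf{The missing idea is to use only four $4$-sets, every two of which share exactly two strands.} The paper uses $1234,1256,1367,1457$, and this is where seven strands are needed. With all lifts taken through one $\widetilde{ab1}$, any two of the four circles meet exactly in the two endpoints of one $\widetilde{\Gamma(1k)}$, $k=2,\dots,7$.
\begin{itemize}
\item \emph{Vertices.} These are the endpoints of $\widetilde{\Gamma(12)},\widetilde{\Gamma(13)},\widetilde{\Gamma(14)}$. They cross transversally at $\widetilde{ab1}$ in the plane $\widetilde{\Gamma(1234)}$, as the hexagonal link of $ab1$ shows. Their endpoints therefore alternate as $\widetilde{a12},\widetilde{b13},\widetilde{a14},\widetilde{b12},\widetilde{a13},\widetilde{b14}$ around $S_{1234}$, and the arcs between consecutive points form a $6$-cycle.
\item \emph{Long diagonals.} These are half-circles of $S_{1256},S_{1367},S_{1457}$ cut off by $\widetilde{\Gamma(12)},\widetilde{\Gamma(13)},\widetilde{\Gamma(14)}$. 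Each meets $S_{1234}$ only in its endpoints.
\item \emph{Disjointness of the diagonals.} When two diagonal circles share $\widetilde{\Gamma(1k)}$ ($k=5,6,7$), the halves are chosen to contain opposite endpoints of it. For example, $\widetilde{a16}$ lies on the $S_{1256}$ half and $\widetilde{b16}$ on the $S_{1367}$ half. Hence the diagonals are pairwise disjoint.
\end{itemize}
A hexagon together with its three long diagonals is $K_{3,3}$.
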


\begin{proof}
    
From Corollary~\ref{subgroup}, the subcomplex $\Gamma(1234)$ of $\cConf_3(\Theta_7)$ is $\pi_1$-injective in $\cConf_3(\Theta_7)$. A local picture about the vertex $ab1$ in Figure~\ref{hexagon} shows that there are simple closed curves $\Gamma(12)$, $\Gamma(13)$, and $\Gamma(14)$ intersecting exactly at $ab1$ in $\Gamma(1234)$. Moreover, these curves lift to transverse pairwise intersecting bi-infinite geodesics in the universal cover $\widetilde{\Gamma(1234)}$, intersecting precisely at a lift $\widetilde{ab1}$ of $ab1$.

Each of these bi-infinite geodesics give two infinite geodesic rays emanating from $\widetilde{ab1}$, in the directions defined by $\widetilde{a1j}$ and $\widetilde{b1j}$ for $2\leq j \leq 4$. We will denote the geodesic ray starting from $\widetilde{ab1}$ in the direction of $\widetilde{a1j}$ by $\gamma_{a1j}$ and starting from $\widetilde{ab1}$ in the direction of $\widetilde{b1j}$ by $\gamma_{b1j}$.

\begin{figure}[t]
    \centering
\begin{subfigure}[t]{0.45\textwidth}
    {
    \begin{tikzpicture}[scale=1.4]

\node[draw, circle, fill=black, inner sep=1.5pt, label=below:$ab1$] (ab1) at (0,0) {};
\node[draw, circle, fill=black, inner sep=1.5pt, label=below:$b12$] (b12) at (0.866,0.5) {};
\node[draw, circle, fill=black, inner sep=1.5pt, label=below:$a13$] (a13) at (0,1) {};
\node[draw, circle, fill=black, inner sep=1.5pt, label=below:$b14$] (b14) at (-0.866,0.5) {};
\node[draw, circle, fill=black, inner sep=1.5pt, label=below:$a12$] (a12) at (-0.866,-0.5) {};
\node[draw, circle, fill=black, inner sep=1.5pt, label=below:$b13$] (b13) at (0,-1) {};
\node[draw, circle, fill=black, inner sep=1.5pt, label=below:$a14$] (a14) at (0.866,-0.5) {};


\node[draw, circle, fill=black, inner sep=1.5pt, label=above:$123$] (123r) at (0.866,1.5) {};
\node[draw, circle, fill=black, inner sep=1.5pt, label=above:$234$] (234l) at (-0.866,1.5) {};
\node[draw, circle, fill=black, inner sep=1.5pt, label=below:$124$] (124l) at (-1.732,0) {};
\node[draw, circle, fill=black, inner sep=1.5pt, label=below:$123$] (123l) at (-0.866,-1.5) {};
\node[draw, circle, fill=black, inner sep=1.5pt, label=below:$234$] (234r) at (0.866,-1.5) {};
\node[draw, circle, fill=black, inner sep=1.5pt, label=below:$124$] (124r) at (1.732,0) {};


\draw (b14) -- (124l);
\draw (b14) -- (234l);
\draw (a12) -- (124l);
\draw (a12) -- (123l);
\draw (b13) -- (123l);
\draw (b13) -- (234r);
\draw (a14) -- (234r);
\draw (a14) -- (124r);
\draw (b12) -- (124r);
\draw (b12) -- (123r);
\draw (a13) -- (234l);
\draw (a13) -- (123r);

\node[draw, circle, fill=black, inner sep=1.5pt, label=below:$ab2$] (b12o) at (1.732,1) {};
\node[draw, circle, fill=black, inner sep=1.5pt, label=above:$ab3$] (a13o) at (0,2) {};
\node[draw, circle, fill=black, inner sep=1.5pt, label=below:$ab4$] (b14o) at (-1.732,1) {};
\node[draw, circle, fill=black, inner sep=1.5pt, label=below:$ab2$] (a12o) at (-1.732,-1) {};
\node[draw, circle, fill=black, inner sep=1.5pt, label=below:$ab3$] (b13o) at (0,-2) {};
\node[draw, circle, fill=black, inner sep=1.5pt, label=below:$ab4$] (a14o) at (1.732,-1) {};


\draw[green!50!black] (ab1) -- (a13) -- (a13o);
\draw[green!50!black] (ab1) -- (b13) -- (b13o);
\draw[red] (ab1) -- (a12) -- (a12o); 
\draw[red] (ab1) -- (b12) -- (b12o);
\draw[blue] (ab1) -- (a14) -- (a14o);
\draw[blue] (ab1) -- (b14) -- (b14o);

\end{tikzpicture}
}
\end{subfigure}
\hfill
\begin{subfigure}[t]{0.5\textwidth}
{
    \begin{tikzpicture}[scale=1.2]

\node[draw, circle, fill=black, inner sep=1pt, label={[xshift=-2pt]left:{$\scriptstyle \widetilde{ab1}$}}] (ab1) at (0,0) {};

\node[label=below:$\widetilde{\gamma_{b12}}$] (b12o) at (1.732,1) {};
\node[label=below right:$\widetilde{\gamma_{a13}}$] (a13o) at (0,2) {};
\node[label=below:$\widetilde{\gamma_{b14}}$] (b14o) at (-1.732,1) {};
\node[label=below:$\widetilde{\gamma_{a12}}$] (a12o) at (-1.732,-1) {};
\node[label=below:$\widetilde{\gamma_{b13}}$] (b13o) at (0,-2) {};
\node[label=below:$\widetilde{\gamma_{a14}}$] (a14o) at (1.732,-1) {};


\draw[->, thick][green!50!black] (ab1) -- (a13o);
\draw[->, thick][green!50!black] (ab1) -- (b13o);
\draw[->, thick][red] (ab1) -- (a12o); 
\draw[->, thick][red] (ab1) -- (b12o);
\draw[->, thick][blue] (ab1) -- (a14o);
\draw[->, thick][blue] (ab1)  -- (b14o);

\draw (0,0) circle [radius=3cm];
\node[draw, circle, fill=red, inner sep=1.5pt] (b12tilde) at (2.598,1.5) {};
\node[draw, circle, fill=red, inner sep=1.5pt] (a12tilde) at (-2.598,-1.5) {};
\node[draw, circle, fill=green!50!black, inner sep=1.5pt] (b13tilde) at (0,-3) {};
\node[draw, circle, fill=green!50!black, inner sep=1.5pt] (b12tilde) at (0,3) {};
\node[draw, circle, fill=blue, inner sep=1.5pt] (b12tilde) at (-2.598,1.5) {};
\node[draw, circle, fill=blue, inner sep=1.5pt] (b12tilde) at (2.598,-1.5) {};

\end{tikzpicture}
}
\end{subfigure}
\caption{ (Left) Simple closed curves of $\cConf_3(\Theta_4)$ embedded in $\cConf_3(\Theta_7)$ and (right) the six geodesic rays corresponding to these closed curves in $\cConf_3(\Theta_7)$ and the corresponding points in $\partial_\infty(\widetilde{\cConf_3(\Theta_7)})$.}

    \label{hexagon}
\end{figure}
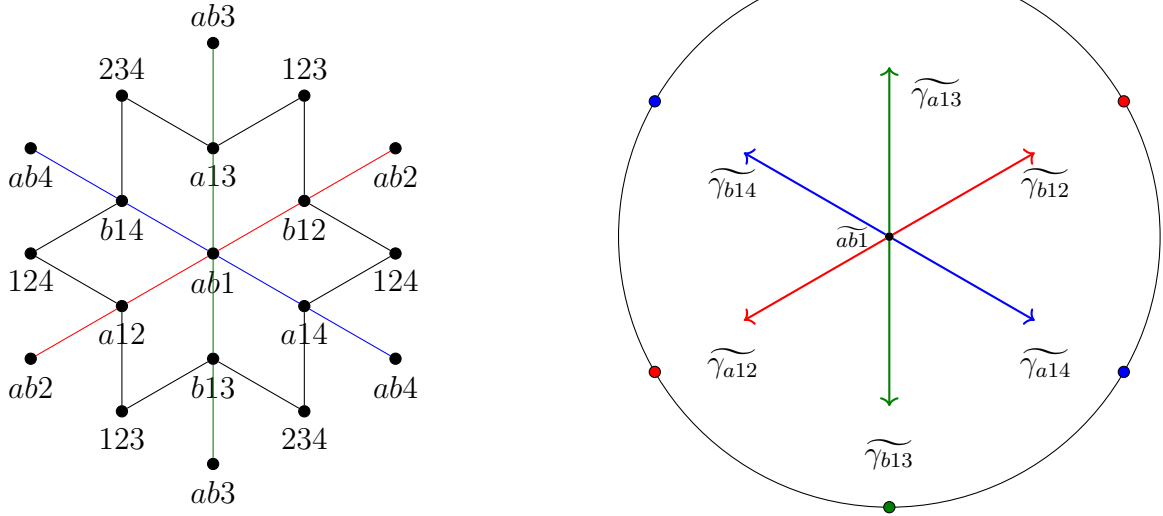

The visual boundary of $\widetilde{\cConf_3(\Theta_4)}$ is homeomorphic to $S^1$. Since the three bi-infinite geodesics corresponding to the lifts of $\Gamma(12), \Gamma(13)$, and $\Gamma(14)$ intersect once transversely in $\widetilde{\Gamma(1234)}$, each of the six associated geodesic rays gives exactly one point in the visual boundary $\partial_\infty \widetilde{\cConf_3(\Theta_4)}$ of $\widetilde{\cConf_3(\Theta_4)}$. We will denote these points by the points in $\widetilde{\Gamma(1234)}$ that define the direction of these rays in the universal cover. So, the six points on the boundary $S^1$ are $\widetilde{a12}, \widetilde{b12}, \widetilde{a13}, \widetilde{b13}, \widetilde{a14}, \text{ and } \widetilde{b14}$. 

These six points lie in the cyclic order $\widetilde{a12}, \widetilde{b13}, \widetilde{a14}, \widetilde{b12}, \widetilde{a13},  \text{ and } \widetilde{b14}$ on the boundary circle of $\Gamma(1234)$, as shown in Figure~\ref{hexagon}. This can be seen by first considering the half-plane of $\widetilde{\Gamma(1234)}$ containing the geodesic rays $\gamma_{a12}$, $\gamma_{b13}$ and $\gamma_{a14}$ to get the order  $\widetilde{a12}, \widetilde{b13}, \text{ and } \widetilde{a14}$ of the vertices. Now consider the half-plane of $\widetilde{\Gamma(1234)}$ containing the geodesic rays $\gamma_{b12}$, $\gamma_{a13}$ and $\gamma_{b14}$, to get the order $\widetilde{b12}, \widetilde{a13}, \text{ and }  \widetilde{b14}$. These six vertices, along with the arcs of the boundary circle, form a hexagon in $\partial_\infty \widetilde{\cConf_3(\Theta_4)}$.  
\begin{figure}[t]
\begin{minipage}[c]{.55\textwidth}
\adjustbox{width=\linewidth}{%
        
        \begin{tikzpicture}
        
        \begin{scope}[xscale=1.3, yscale=2]
		\tikzset{every picture/.style={line width=1pt}} 
            \filldraw[black] (0, 1) circle (1pt) node[anchor=south]{$b$};
			\filldraw[black] (0, -1) circle (1pt) node[anchor=north]{$a$};
			\filldraw[black] (-1.5, 0) circle (1pt) node[anchor=west]{$1$};
			\filldraw[black] (-1, 0) circle (1pt) node[anchor=west]{$2$};
			\filldraw[black] (-0.5, 0) circle (1pt) node[anchor=west]{$3$};
			\filldraw[black] (0, 0) circle (1pt) node[anchor=west]{$4$};
			\filldraw[black] (0.5, 0) circle (1pt) node[anchor=west]{$5$};
			\filldraw[black] (1, 0) circle (1pt) node[anchor=west]{$6$};
			\filldraw[black] (1.5, 0) circle (1pt) node[anchor=west]{$7$};
			
			\draw (0, 1) to[out=180, in=90] (-1.5, 0) to[out=-90, in=180] (0, -1);
            \draw[line width=6pt, yellow!50, opacity=0.3, line cap=round] (0, 1) to[out=180, in=90] (-1.5, 0) to[out=-90, in=180] (0, -1);
            
			\draw (0, 1) to[out=190, in=90] (-1, 0) to[out=-90, in=170] (0, -1);
            \draw[line width=6pt, yellow!50, opacity=0.3, line cap=round] (0, 1) to[out=190, in=90] (-1, 0) to[out=-90, in=170] (0, -1);
            
			\draw (0, 1) to[out=230, in=90] (-0.5, 0) to[out=-90, in=130] (0, -1);
            \draw[line width=6pt, yellow!50, opacity=0.3, line cap=round] (0, 1) to[out=230, in=90] (-0.5, 0) to[out=-90, in=130] (0, -1);
            
			\draw (0, 1) to[out=-90, in=90] (0, 0) to[out=-90, in=90] (0, -1);
            \draw[line width=6pt, yellow!50, opacity=0.3, line cap=round] (0, 1) to[out=-90, in=90] (0, 0) to[out=-90, in=90] (0, -1);
            
			\draw (0, 1) to[out=-30, in=90] (0.5, 0) to[out=-90, in=30] (0, -1);
			\draw (0, 1) to[out=-10, in=90] (1, 0) to[out=-90, in=10] (0, -1);
			\draw (0, 1) to[out=0, in=90] (1.5, 0) to[out=-90, in=0] (0, -1);

            \begin{scope}[xscale=0.77, yscale=0.5]
            \draw (0,0) circle [radius=3.4cm];
            \node[draw, circle, fill=green!50!black, inner sep=1.5pt, label=below:$\widetilde{b12}$] (b12tilde) at (3,1.6) {};
            \node[draw, circle, fill=green!50!black, inner sep=1.5pt, label=below:$\widetilde{a12}$] (a12tilde) at (-3,-1.6) {};
            \node[draw, circle, fill=blue, inner sep=1.5pt, label=below:$\widetilde{b13}$] (b13tilde) at (0,-3.4) {};
            \node[draw, circle, fill=blue, inner sep=1.5pt, label=below:$\widetilde{a13}$] (b12tilde) at (0,3.4) {};
            \node[draw, circle, fill=red, inner sep=1.5pt, label=below:$\widetilde{b14}$] (b12tilde) at (-3,1.6) {};
            \node[draw, circle, fill=red, inner sep=1.5pt, label=below:$\widetilde{a14}$] (b12tilde) at (3,-1.6) {};

            \node[draw=none, fill=none, inner sep=0pt] at (1.4,-2.3) {$\Gamma(1234)$};

            \end{scope}
            
              \begin{scope}[shift={(3.5,2.5)}, scale=0.75]

                \filldraw[black] (0, 1) circle (1pt) node[anchor=south]{$b$};
    			\filldraw[black] (0, -1) circle (1pt) node[anchor=north]{$a$};
    			\filldraw[black] (-1.5, 0) circle (1pt) node[anchor=west]{$1$};
    			\filldraw[black] (-1, 0) circle (1pt) node[anchor=west]{$2$};
    			\filldraw[black] (-0.5, 0) circle (1pt) node[anchor=west]{$3$};
    			\filldraw[black] (0, 0) circle (1pt) node[anchor=west]{$4$};
    			\filldraw[black] (0.5, 0) circle (1pt) node[anchor=west]{$5$};
    			\filldraw[black] (1, 0) circle (1pt) node[anchor=west]{$6$};
    			\filldraw[black] (1.5, 0) circle (1pt) node[anchor=west]{$7$};

    			\draw (0, 1) to[out=180, in=90] (-1.5, 0) to[out=-90, in=180] (0, -1);
                \draw[line width=6pt, green!50!black!50, opacity=0.3, line cap=round] (0, 1) to[out=180, in=90] (-1.5, 0) to[out=-90, in=180] (0, -1);

    			\draw (0, 1) to[out=190, in=90] (-1, 0) to[out=-90, in=170] (0, -1);
                
    			\draw (0, 1) to[out=230, in=90] (-0.5, 0) to[out=-90, in=130] (0, -1);
                \draw[line width=6pt, green!50!black!50, opacity=0.3, line cap=round] (0, 1) to[out=230, in=90] (-0.5, 0) to[out=-90, in=130] (0, -1);
                
    			\draw (0, 1) to[out=-90, in=90] (0, 0) to[out=-90, in=90] (0, -1);
    			\draw (0, 1) to[out=-30, in=90] (0.5, 0) to[out=-90, in=30] (0, -1);
    			\draw (0, 1) to[out=-10, in=90] (1, 0) to[out=-90, in=10] (0, -1);
                \draw[line width=6pt, green!50!black!50, opacity=0.3, line cap=round] (0, 1) to[out=-10, in=90] (1, 0) to[out=-90, in=10] (0, -1);
                
    			\draw (0, 1) to[out=0, in=90] (1.5, 0) to[out=-90, in=0] (0, -1);
                \draw[line width=6pt, green!50!black!50, opacity=0.3, line cap=round] (0, 1) to[out=0, in=90] (1.5, 0) to[out=-90, in=0] (0, -1);

                \node[draw=none, fill=none, inner sep=0pt] at (1.9,-1) {$\Gamma(1367)$};

            \end{scope}

              \begin{scope}[shift={(-3.5,2.5)}, scale=0.75]

                \filldraw[black] (0, 1) circle (1pt) node[anchor=south]{$b$};
    			\filldraw[black] (0, -1) circle (1pt) node[anchor=north]{$a$};
    			\filldraw[black] (-1.5, 0) circle (1pt) node[anchor=west]{$1$};
    			\filldraw[black] (-1, 0) circle (1pt) node[anchor=west]{$2$};
    			\filldraw[black] (-0.5, 0) circle (1pt) node[anchor=west]{$3$};
    			\filldraw[black] (0, 0) circle (1pt) node[anchor=west]{$4$};
    			\filldraw[black] (0.5, 0) circle (1pt) node[anchor=west]{$5$};
    			\filldraw[black] (1, 0) circle (1pt) node[anchor=west]{$6$};
    			\filldraw[black] (1.5, 0) circle (1pt) node[anchor=west]{$7$};
    			
    			\draw (0, 1) to[out=180, in=90] (-1.5, 0) to[out=-90, in=180] (0, -1);
                \draw[line width=6pt, red!50, opacity=0.3, line cap=round] (0, 1) to[out=180, in=90] (-1.5, 0) to[out=-90, in=180] (0, -1);
                
    			\draw (0, 1) to[out=190, in=90] (-1, 0) to[out=-90, in=170] (0, -1);
                \draw[line width=6pt, red!50, opacity=0.3, line cap=round] (0, 1) to[out=190, in=90] (-1, 0) to[out=-90, in=170] (0, -1);
                
    			\draw (0, 1) to[out=230, in=90] (-0.5, 0) to[out=-90, in=130] (0, -1);
    			\draw (0, 1) to[out=-90, in=90] (0, 0) to[out=-90, in=90] (0, -1);
    			\draw (0, 1) to[out=-30, in=90] (0.5, 0) to[out=-90, in=30] (0, -1);
                \draw[line width=6pt, red!50, opacity=0.3, line cap=round] (0, 1) to[out=-30, in=90] (0.5, 0) to[out=-90, in=30] (0, -1);
                
    			\draw (0, 1) to[out=-10, in=90] (1, 0) to[out=-90, in=10] (0, -1);
                \draw[line width=6pt, red!50, opacity=0.3, line cap=round] (0, 1) to[out=-10, in=90] (1, 0) to[out=-90, in=10] (0, -1);
                
    			\draw (0, 1) to[out=0, in=90] (1.5, 0) to[out=-90, in=0] (0, -1);

                \node[draw=none, fill=none, inner sep=0pt] at (1.9,-1) {$\Gamma(1256)$};

            \end{scope}

              \begin{scope}[shift={(0,-3.4)}, scale=0.75]

                \filldraw[black] (0, 1) circle (1pt) node[anchor=south]{$b$};
    			\filldraw[black] (0, -1) circle (1pt) node[anchor=north]{$a$};
    			\filldraw[black] (-1.5, 0) circle (1pt) node[anchor=west]{$1$};
    			\filldraw[black] (-1, 0) circle (1pt) node[anchor=west]{$2$};
    			\filldraw[black] (-0.5, 0) circle (1pt) node[anchor=west]{$3$};
    			\filldraw[black] (0, 0) circle (1pt) node[anchor=west]{$4$};
    			\filldraw[black] (0.5, 0) circle (1pt) node[anchor=west]{$5$};
    			\filldraw[black] (1, 0) circle (1pt) node[anchor=west]{$6$};
    			\filldraw[black] (1.5, 0) circle (1pt) node[anchor=west]{$7$};
    			
    			\draw (0, 1) to[out=180, in=90] (-1.5, 0) to[out=-90, in=180] (0, -1);
                \draw[line width=6pt, blue!50, opacity=0.3, line cap=round] (0, 1) to[out=180, in=90] (-1.5, 0) to[out=-90, in=180] (0, -1);
                
    			\draw (0, 1) to[out=190, in=90] (-1, 0) to[out=-90, in=170] (0, -1);
    			\draw (0, 1) to[out=230, in=90] (-0.5, 0) to[out=-90, in=130] (0, -1);
    			\draw (0, 1) to[out=-90, in=90] (0, 0) to[out=-90, in=90] (0, -1);
                \draw[line width=6pt, blue!50, opacity=0.3, line cap=round] (0, 1) to[out=-90, in=90] (0, 0) to[out=-90, in=90] (0, -1);
                
    			\draw (0, 1) to[out=-30, in=90] (0.5, 0) to[out=-90, in=30] (0, -1);
                \draw[line width=6pt, blue!50, opacity=0.3, line cap=round] (0, 1) to[out=-30, in=90] (0.5, 0) to[out=-90, in=30] (0, -1);
                
    			\draw (0, 1) to[out=-10, in=90] (1, 0) to[out=-90, in=10] (0, -1);
    			\draw (0, 1) to[out=0, in=90] (1.5, 0) to[out=-90, in=0] (0, -1);
                \draw[line width=6pt, blue!50, opacity=0.3, line cap=round] (0, 1) to[out=0, in=90] (1.5, 0) to[out=-90, in=0] (0, -1);
                \node[draw=none, fill=none, inner sep=0pt] at (1.9,-1) {$\Gamma(1457)$};
             \end{scope}
             \end{scope}
		\end{tikzpicture}
}

\end{minipage}\hfill
\begin{minipage}[c]{.40\textwidth}
\adjustbox{width=\linewidth}{%

\begin{tikzpicture}
    
    \node[draw, circle, fill=black, inner sep=1.5pt, label=below:$\widetilde{a12}$] (a12) at (0,6) {};
    \node[draw, circle, fill=red, inner sep=1.5pt, label=below:$\widetilde{b15}$] (b15) at (4,6) {};
    \node[draw, circle, fill=red, inner sep=1.5pt, label=below:$\widetilde{a16}$] (a16) at (8,6) {};
    \node[draw, circle, fill=black, inner sep=1.5pt, label=below:$\widetilde{b12}$] (b12) at (12,6) {};

    \node[draw, circle, fill=black, inner sep=1.5pt, label=below:$\widetilde{a13}$] (a13) at (0,3) {};
    \node[draw, circle, fill=green!50!black, inner sep=1.5pt, label=below:$\widetilde{b16}$] (b16) at (4,3) {};
    \node[draw, circle, fill=green!50!black, inner sep=1.5pt, label=below:$\widetilde{a17}$] (a17) at (8,3) {};
    \node[draw, circle, fill=black, inner sep=1.5pt, label=below:$\widetilde{b13}$] (b13) at (12,3) {};

    \node[draw, circle, fill=black, inner sep=1.5pt, label=below:$\widetilde{a14}$] (a14) at (0,0) {};
    \node[draw, circle, fill=blue, inner sep=1.5pt, label=below:$\widetilde{b17}$] (b17) at (4,0) {};
    \node[draw, circle, fill=blue, inner sep=1.5pt, label=below:$\widetilde{a15}$] (a15) at (8,0) {};
    \node[draw, circle, fill=black, inner sep=1.5pt, label=below:$\widetilde{b14}$] (b14) at (12,0) {};

    \draw[red] (a12) -- (b15) --node[above] {$\partial_{\infty} \widetilde{\Gamma(1256)}$} (a16) -- (b12);
    \draw[green!50!black] (a13) -- (b16) --node[above] {$\partial_{\infty} \widetilde{\Gamma(1367)}$} (a17) -- (b13);
    \draw[blue] (a14) -- (b17) --node[above] {$\partial_{\infty} \widetilde{\Gamma(1457)}$} (a15) -- (b14);

    \draw (a12) -- (b13);
    \draw (a12) -- (b14);
    \draw (a13) -- (b12);
    \draw (a13) -- (b14);
    \draw (a14) -- (b12);
    \draw (a14) -- (b13);

\end{tikzpicture}}

\end{minipage}
\caption{Subsurfaces $\Gamma(1234)$, $\Gamma(1256)$, $\Gamma(1367)$, and $\Gamma(1457)$ and the corresponding $K_{3,3}$ in $\partial_\infty B_3(\Theta_7)$.}
\label{fig:subsurfaces}
\end{figure}
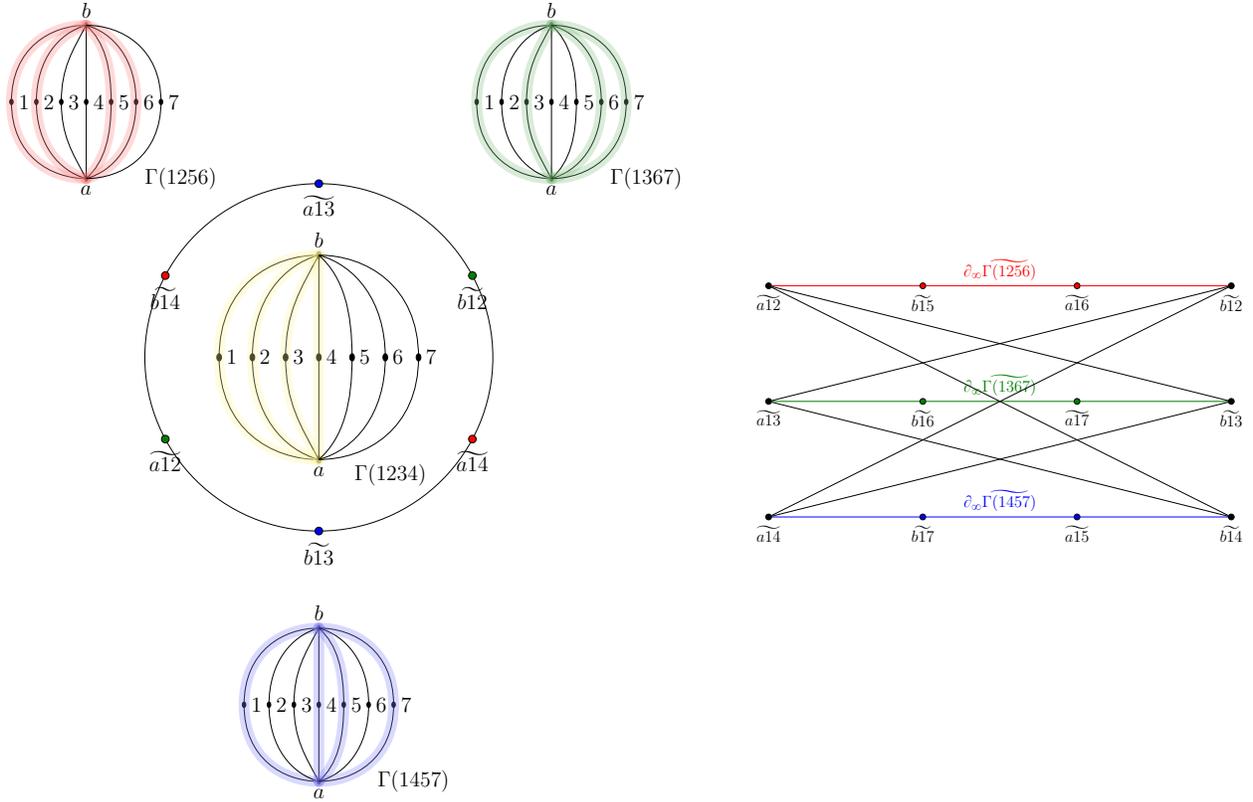

 Now, consider the subsurfaces $\Gamma(1256)$, $\Gamma(1367)$, and $\Gamma(1457)$, as shown in 
 Figure~\ref{fig:subsurfaces}, with universal covers $\widetilde{\Gamma(1256)}$, $\widetilde{\Gamma(1367)}$, and $\widetilde{\Gamma(1457)}$, respectively. Each of these universal covers intersects $\widetilde{\Gamma(1234)}$ in distinct geodesics. These geodesics divide $\widetilde{\Gamma(1256)}$, $\widetilde{\Gamma(1367)}$, and $\widetilde{\Gamma(1457)}$ into half-planes, the visual boundaries of which contain the subdivided edges (in the order) $(\widetilde{a12}, \widetilde{b15}, \widetilde{a16}, \widetilde{b12})$, $(\widetilde{a13}, \widetilde{b16}, \widetilde{a17}, \widetilde{b13})$ and $(\widetilde{a14}, \widetilde{b17}, \widetilde{a15}, \widetilde{b14})$, respectively, thus completing the hexagon to a $K_{3,3}$ in $\partial_\infty \widetilde{\cConf_3(\Gamma_7)}$, as shown in Figure~\ref{fig:subsurfaces}.
\end{proof}

\begin{remark}
    The choice of embedded $\cConf_3(\Theta_4)$, and thus of the corresponding surface groups, is not unique and just depends on choice of four sets of four strands (i.e. paths from vertex $a$ to $b$) in the $\Theta_7$ graph that pairwise intersect in two strands.
\end{remark}

\begin{proof}[Proof of Theorem~\ref{thm:main7}]
    Since quasi-isometric hyperbolic groups have homeomorphic boundaries, it follows that any group quasi-isometric to $B_3(\Theta_7)$ is not a $3$-manifold group. 
\end{proof}

\begin{Corollary}\label{cor:3m}
    By Corollary~\ref{subgroup}, $B_3(\Theta_n)$ is not a $3$-manifold group for any $n \geq 7$. 
 \end{Corollary}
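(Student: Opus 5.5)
The plan is to reduce the case $n \geq 7$ to the case $n = 7$, which is Theorem~\ref{thm:main2}, by using the quasi-convex embedding of boundaries from Corollary~\ref{subgroup}.

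First, fix $n \geq 7$. Corollary~\ref{subgroup} gives a $\pi_1$-injective inclusion $\tConf{7} \hookrightarrow \tConf{n}$. This inclusion lifts to an isometric embedding of universal covers, so $B_3(\Theta_7)$ is a quasi-convex subgroup of $B_3(\Theta_n)$. Hence the boundary inclusion $\partial_\infty \widetilde{\tConf{7}} \hookrightarrow \partial_\infty \widetilde{\tConf{n}}$ is a topological embedding.

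Before using this, I must check that $B_3(\Theta_n)$ is a hyperbolic CAT($0$) group, so that the Bestvina--Kapovich--Kleiner criterion applies. It is CAT($0$) because $\tConf{n}$ is a compact nonpositively curved cube complex. It is hyperbolic because $\Theta_n$ is a pulsar graph, so Genevois' classification applies. Alternatively, $\Theta_n$ contains no two disjoint cycles, so the complex contains no flat tori.

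Next, the proof of Theorem~\ref{thm:main2} produced a subdivided $K_{3,3}$ embedded in $\partial_\infty \widetilde{\tConf{7}}$. Composing it with the boundary embedding gives an embedded $K_{3,3}$ in $\partial_\infty B_3(\Theta_n)$. The boundary version of the Bestvina--Kapovich--Kleiner theorem then says that $B_3(\Theta_n)$ is not a $3$-manifold group. The same argument gives the quasi-isometric statement as well, because the boundary is a quasi-isometry invariant.

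The main obstacle is a subtlety in the naive alternative argument. One might try to say that a subgroup of a $3$-manifold group is a $3$-manifold group, so it suffices that $B_3(\Theta_7)$ embeds in $B_3(\Theta_n)$. That is true only after passing to a cover: an infinite-index subgroup of $\pi_1(M)$ is the fundamental group of a possibly noncompact cover of $M$. Theorem~\ref{thm:main2} is phrased for compact manifolds, so this route does not close directly. This is why I prefer the boundary argument.

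For the remark about the Euler characteristic when $m > 7$, one would also invoke the fact that a $3$-manifold group with compact $K(G,1)$ has nonpositive Euler characteristic. That fact is not needed for the corollary itself.
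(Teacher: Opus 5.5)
Your proposal is correct and follows the paper's route: Corollary~\ref{subgroup} makes $B_3(\Theta_7)$ quasi-convex in $B_3(\Theta_n)$, the induced boundary embedding carries the $K_{3,3}$ built in the proof of Theorem~\ref{thm:main2} into $\partial_\infty B_3(\Theta_n)$, and Bestvina--Kapovich--Kleiner then finishes (and the quasi-isometric version follows as you say). Two of your side remarks are off, though neither affects the argument. First, the subgroup route you call blocked actually works: by Scott's compact core theorem, the cover of $M$ corresponding to the finitely generated subgroup $B_3(\Theta_7)$ contains a compact $3$-manifold with that fundamental group. Second, ``no two disjoint cycles'' alone does not exclude $\mathbb{Z}^2$ on three strands, since a cycle disjoint from a vertex of degree at least $3$ also produces one; the pulsar-graph citation is the justification to keep.
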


\begin{figure}[t]
\vspace{0pt}
\begin{subfigure}{0.3\textwidth}
    {
\begin{tikzpicture}[xscale = 1, yscale = 1.5]

    \node[draw, circle, fill=black, inner sep=1.5pt, label=left:$123$] (A) at (1,0) {};
    \node[draw, circle, fill=black, inner sep=1.5pt, label={right:$124$}] (B) at (1.75,0) {};
   \node[] (C1) at (2.75,0) {};
   \node[] (C2) at (3.5,0) {};  
   
    \node[draw, circle, fill=black, inner sep=1.5pt, label=right:$n12$] (C) at (4,0) {};
   
    \node[draw, circle, fill=black, inner sep=1.5pt, label=above:$ab1$] (G) at (2.5,1) {};
    \node[draw, circle, fill=black, inner sep=1.5pt, label=below:$ab2$] (H) at (2.5,-1) {};

    \draw[black, thick] (G) -- (A);
    \draw[black, thick] (G) -- (B);
    \draw[black, thick] (G) -- (C);

    \draw[black, thick] (H) -- (A);
    \draw[black, thick] (H) -- (B);
    \draw[black,thick] (H) -- (C);
    
    \draw[dotted] (C1) -- (C2);

    \end{tikzpicture}
}
\subcaption[]{$v = a12$ or $v=b12$}
\end{subfigure}
\hfill
\begin{subfigure}{0.2\textwidth}
{
\begin{tikzpicture}[xscale=2, yscale=1.5]

    \node[draw, circle, fill=black, inner sep=1.5pt, label=above:$a23$] (A1) at (0,1) {};
    \node[draw, circle, fill=black, inner sep=1.5pt, label=above:$b23$] (A2) at (1,1) {};
    \node[draw, circle, fill=black, inner sep=1.5pt, label=left:$a13$] (B1) at (0,0) {};
    \node[draw, circle, fill=black, inner sep=1.5pt, label=right:$b13$] (B2) at (1,0) {};
    \node[draw, circle, fill=black, inner sep=1.5pt, label=below:$a12$] (C1) at (0,-1) {};
    \node[draw, circle, fill=black, inner sep=1.5pt, label=below:$b12$] (C2) at (1,-1) {};

    \draw[black, thick] (A1) -- (B2);
    \draw[black, thick] (A1) -- (C2);   
    \draw[black, thick] (B1) -- (A2);
    \draw[black, thick] (B1) -- (C2);
    \draw[black, thick] (C1) -- (A2);   
    \draw[black, thick] (C1) -- (B2);
    
\end{tikzpicture}
}
\subcaption[]{$v = 123$}
\end{subfigure}
\hfill
\vspace{0pt}
\begin{subfigure}{0.3\textwidth}

{
  \begin{tikzpicture}[xscale = 1.5, yscale = 1.5]

    \node[draw, circle, fill=black, inner sep=1.5pt, label=above:$a12$] (A1) at (0,1) {};
    \node[draw, circle, fill=black, inner sep=1.5pt, label=above:$a13$] (B1) at (1.5,1) {};
    \node[draw, circle, fill=black, inner sep=1.5pt, label=above:$a1n$] (D1) at (3,1) {};
    \node[] (E01) at (2,1) {};
    \node[] (E02) at (2.5,1) {};
  
    \draw[dotted] (E01) -- (E02);

    \node[draw, circle, fill=black, inner sep=1.5pt, label=below:$b12$] (A3) at (0,-1) {};
    \node[draw, circle, fill=black, inner sep=1.5pt, label=below:$b13$] (B3) at (1.5,-1) {};
    \node[draw, circle, fill=black, inner sep=1.5pt, label=below:$b1n$] (D3) at (3,-1) {};
    \node[] (E03) at (2,-1) {};
    \node[] (E04) at (2.5,-1) {};

    \draw[black, thick] (A3) -- (B1);
    \draw[black, thick] (A3) -- (D1);
 
  \draw[black, thick] (B3) -- (A1);
    \draw[black, thick] (B3) -- (D1);
  
    \draw[black, thick] (D3) -- (B1);
    \draw[black, thick] (D3) -- (A1);
    
  \draw[dotted] (E01) -- (E02); 
  \draw[dotted] (E03) -- (E04);

\end{tikzpicture}
}
\subcaption[]{$v = ab1$}
\end{subfigure}
\hfill

\caption{The links $\Lk(v, \cConf_3(\Theta_n))$ of vertices of $\cConf_3(\Theta_n)$.}
\label{fig:full_link}
\end{figure}

\vspace{-8pt}

\begin{remark}\label{eulerchar}
There is an alternate proof of Corollary~\ref{cor:3m} using tools from the theory of $3$-manifolds, as outlined in the next paragraph. However, Theorem~\ref{thm:main7} yields a stronger conclusion, as it detects the presence of an embedded non-planar graph in the visual boundary of the group up to quasi-isometry.

Let $G$ be an infinite non-cyclic, finitely generated group that does not split as a free product. Using tools from the theory of $3$-manifolds, if $G$ is $3$-manifold group, it must be the fundamental group of an aspherical compact oriented irreducible $3$-manifold with nonpositive Euler characteristic. 
Since the link of every vertex of $\cConf_3(\Theta_n)$ is connected and has no cut vertices or edges (see Figure~\ref{fig:full_link}), hence, by ~\cite[Theorem E]{wilton2024surface}, the group $B_3(\Theta_m)$ is one-ended and therefore does not split as a free product. Thus, $B_3(\Theta_m)$ is not a $3$-manifold group for $m>7$ since $\chi(B_3(\Theta_m))>0$ for $m>7$.

Suppose now that $G$ is also hyperbolic with $\chi(G) = 0$ and there is a $3$-manifold such that $\pi_1(M) \cong G$. Since $\chi(M) = 0$, we have $\chi(\partial M)=0$. Because $M$ is aspherical and irreducible, each boundary component must be an incompressible torus. Thus, we get an injective map from $\mathbb{Z}^2$ into $G$, which contradicts $G$ being hyperbolic. Since $B_3(\Theta_7)$ is one such group, it cannot be a $3$-manifold group.

\end{remark}

\bibliographystyle{amsalpha}
\bibliography{refs}
\end{document}